\newtheorem*{maintheorem}{Main theorem}
\newtheorem{lemma}{Lemma}
\newtheorem{proposition}{Proposition}
\theoremstyle{remark}
\newtheorem*{acknowledgements}{Acknowledgements}
\renewcommand{\d}{{\mathrm d}}
\newcommand\m{{\operatorname{m}}}
\begin{document}

\hypersetup{pdfauthor={Mathew Rogers, Wadim Zudilin},%
pdftitle={On the Mahler measure of $1+X+1/X+Y+1/Y$}}

\title{On the Mahler measure of $1+X+1/X+Y+1/Y$}

\author{Mathew Rogers}
\address{Department of Mathematics, University of Illinois, Urbana, IL 61801, USA}
\email{mathewrogers@gmail.com}

\author{Wadim Zudilin}
\address{School of Mathematical and Physical Sciences,
The University of Newcastle, Callaghan, NSW 2308, AUSTRALIA}
\email{wadim.zudilin@newcastle.edu.au}

\thanks{The first author is supported by National Science Foundation award DMS-0803107.
The second author is supported by Australian Research Council grant DP110104419.}

\date{March 29, 2010}

\subjclass[2000]{Primary 33C20; Secondary 11F03, 14H52, 19F27, 33C75, 33E05}
\keywords{Mahler measure, $L$-value of elliptic curve, modular equation,
hypergeometric series, lattice sum, elliptic dilogarithm}

\begin{abstract}
We prove a conjectured formula relating the Mahler measure of the
Laurent polynomial $1+X+X^{-1}+Y+Y^{-1}$ to the $L$-series of a
conductor $15$ elliptic curve.
\end{abstract}

\maketitle

\section{Introduction}
\label{s-intro}

The purpose of this paper is to prove a conjectured identity
relating the Mahler measure of a two-variable Laurent polynomial,
to the $L$-series of a conductor $15$ elliptic curve~\cite{De}:
\begin{equation}\label{deninger identity}
\m\biggl(1+X+\frac{1}{X}+Y+\frac{1}{Y}\biggr)
=\frac{15}{4\pi^2}L(E_{15},2).
\end{equation}
Recall that the (logarithmic) Mahler measure of a polynomial
$P(X_1,\dots,X_n)\in\mathbb C[X_1^{\pm1},\dots,X_n^{\pm1}]$
is the arithmetic mean of $\log|P|$ on the torus
$\mathbb T^n=\{(X_1,\dots,X_n)\in\mathbb C^n:|X_1|=\dots=|X_n|=1\}$,
\begin{equation}
\m(P):=\idotsint_{[0,1]^n}
\log|P(e^{2\pi i\theta_1},\dots,e^{2\pi i\theta_n})|\,\d\theta_1\dotsb\d\theta_n.
\label{Mahler}
\end{equation}
The study of multi-variable Mahler measures originated in the work
of Smyth, who proved relations with Dirichlet $L$-values and
special values of the Riemann zeta function \cite{Sm}. Formula
\eqref{deninger identity} is the first known relation between a
Mahler measure and the $L$-series of an elliptic curve. The
original formulation is due to Deninger, who proved that the
identity follows, up to an unknown rational factor, from the
Beilinson conjectures~\cite{De}. Boyd subsequently calculated the
rational factors, and also found~\cite{Bo1} that similar
identities were numerically true for the polynomial family
$k+X+X^{-1}+Y+Y^{-1}$ whenever $k\in\mathbb{Z}$. Bertin \cite{BE}
and Rodriguez-Villegas \cite{RV} have also investigated Mahler
measures of elliptic curves.

The authors recently proved Boyd's conjectures for
non-CM elliptic curves of conductors $20$ and $24$ \cite{RZ}.  The
basic idea was to manipulate the cusp forms associated with the
elliptic curves, in order to obtain elementary integrals for the
$L$-values. In the conductor $20$ case, it was shown that
\begin{equation}\label{conductor 20}
L(E_{20},2)=-\frac{\pi}{20}\int_{0}^{1}\frac{(1-6t)\log(1+4t)}{\sqrt{t(1-t)(1+4t^2)}}\,\d t.
\end{equation}
The integrals were then related to Mahler measures through an
intricate analysis of hypergeometric functions.  Formula~\eqref{conductor 20}
can be reduced to a Mahler measure by setting
$k=4$ in an identity valid for $k\in[2,8]$:
\begin{equation*}
\frac{1}{2\pi}\int_{0}^{1}\frac{(2-k+3k t)\log(1+kt)}{\sqrt{t(1-t)(4+(4-k)k t+k^2 t^2)}}\,\d t
=\m\bigl((1+X)(1+Y)(X+Y)-kXY\bigr).
\end{equation*}
As might be expected, these sorts of integrals are difficult to
analyze. It required a great deal of trial and error to introduce
the parameter~$k$ into~\eqref{conductor 20}. Even with the correct
definition, it was very difficult to relate the integral to a
Mahler measure.

We will use elementary techniques to prove formula~\eqref{deninger
identity}.  Our method relies upon integrating Ramanujan's modular
equations, and is applicable to many different elliptic curves.
Prior to our work, the only method for attacking Boyd's
conjectures centered around Beilinson's theorem. Brunault and
Mellit used Beilinson's theorem to prove Boyd's conjectures for
conductor $11$ and $14$ elliptic curves \cite{Br}, \cite{Me}. We
expect to present elementary proofs of their results in a future
paper.  It is important to mention the fact that Zagier and
Kontsevich predicted the existence of formulas like
\eqref{conductor 20}, as a consequence of Beilinson's theorem
\cite[\S\,3.4]{KZ}.  While our current method is independent of
such $K$-theoretic considerations, it seems that the two
approaches yield overlapping results. We believe that this current
work, and our previous paper \cite{RZ}, are the first instances
where elementary formulas such as \eqref{conductor 20} have been
explicitly stated.

We will introduce two new ideas in this paper.  The first is that
it is possible to express \emph{many} different $L$-values in
terms of a single function $H(x)$, the function introduced
in~\cite{RZ}. If we consider the eta function with respect to~$q$,
\begin{equation*}
\eta(q):=q^{1/24}\prod_{n=1}^{\infty}(1-q^{n}),
\end{equation*}
and define the signature~$3$ theta functions~\cite[Chap.~33]{Be5},~\cite{Borwein}
\begin{equation}\label{3theta}
\begin{gathered}
a(q):=\sum_{m,n\in\mathbb Z}q^{m^2+mn+n^2}, \quad
b(q):=\sum_{m,n\in\mathbb Z}e^{2\pi
i(m-n)/3}q^{m^2+mn+n^2}=\frac{\eta^3(q)}{\eta(q^3)},
\\
\quad\text{and}\quad c(q):=\sum_{m,n\in\mathbb
Z}q^{(m+1/3)^2+(m+1/3)(n+1/3)+(n+1/3)^2}=3\frac{\eta^3(q^3)}{\eta(q)},
\end{gathered}
\end{equation}
then $H(x)$ is given by
\begin{equation}\label{H(x) definition}
H(x):=\int_{0}^{1}\frac{\eta^3(q^3)}{\eta(q)}\,\frac{\eta^3(q^x)}{\eta(q^{3x})}\,\log q\,\frac{\d q}{q}
=\frac{1}{3}\int_{0}^{1}b(q^x) c(q)\log q\,\frac{\d q}{q}.
\end{equation}
If we recall that the conductor $27$ elliptic curve is associated
to $\eta^2(q^3) \eta^2(q^9)$ \cite{Ono}, then it is simple to see
that
\begin{equation*}
-9L(E_{27},2)=H(1).
\end{equation*}
It is much less obvious that the $L$-series of a conductor $15$
elliptic curve also reduces to values of $H(x)$.  We will use a
telescoping modular equation to prove that
\begin{equation}\label{F(3,5) in terms of H intro}
-45L(E_{15},2)=\frac{1}{5}H\biggl(\frac{1}{15}\biggr)
+5H\biggl(\frac{5}{3}\biggr)+\frac{4\pi^2}{3}\log3.
\end{equation}
We have discovered that at least $9$ different $L$-values can be
related to $H(x)$; those formulas are presented in the next
section.  The definition of $H(x)$ was initially guessed after
examining the complex-multiplication, conductor $27$, example.
There are several additional functions which possess properties
analogous to $H(x)$. Those functions can be used to prove many
additional relations between Mahler measures and $L$-series of
elliptic curves, and will be examined in forthcoming papers.

The second idea we will require, is that certain linear
combinations of $H(x)$ can be reduced to elementary integrals.
These identities go well beyond the scope of our previous analysis
in~\cite{RZ}. For any $x\in\mathbb{Q}\cap(0,\infty)$, there exists
a polynomial relation between $u$ and $v$, such that
\begin{align}
\label{H main theorem in intro}
&
xH\biggl(\frac{x}{3}\biggr)+\frac{1}{x}H\biggl(\frac{1}{3x}\biggr)+2H\biggl(\frac{1}{3}\biggr)
\\ &\quad
=4\pi\int_{v\in[0,1]}\log\biggl(\frac{1-R+v}{u}\biggr)
\d\arctan\biggl(\frac{\sqrt{3}(1+R)}{1-R-2v}\biggr),
\nonumber
\end{align}
where $R^3-3v R-v^3=1-u^3$. The right-hand side of~\eqref{H main
theorem in intro} is essentially a function of a polynomial. While
the value of~$x$ dictates the choice of polynomial, it should be
possible to evaluate the integral for any sufficiently simple
algebraic relation between $u$ and~$v$.  When $x=2$ the
corresponding relation is $u+v-1=0$, and when $x=5$ the relation
is given by $(u+v-1)^2-9uv=0$. We will use formulas \eqref{F(3,5)
in terms of H intro} and~\eqref{H main theorem in intro} to prove
the conductor $15$ conjecture.

\section{Telescoping modular equations and numerical conjectures}
\label{sec2}
In this section we will prove formulas relating $L$-values to
$H(x)$ defined in~\eqref{H(x) definition}.
We will also present some unproven formulas, which hold to
high numerical precision. A number of similar formulas were proved
in~\cite{RZ}. The basic idea in the previous paper, was to
decompose cusp forms into signature~$3$ theta functions. For
instance, integrating the formula
\begin{equation}\label{conductor 36 modular equation}
3\eta^4(q^6)=b(q^4)c(q^3)-b(q)c(q^{12}),
\end{equation}
leads to a linear relation between $L(E_{36},2)$, $H(4/3)$ and
$H(1/12)$ \cite{RZ}. Recall that the $L$-series of a conductor
$36$ elliptic curve equals the Mellin transform of $\eta^4(q^6)$
\cite{Ono}.  We can state this result in the form
$L(E_{36},2)=F(1,1)$, where the quadruple sum
\begin{equation}
\label{F(b,c)}
\begin{split}
F(B,C)&:=(B+1)^2(C+1)^2
\\ &\qquad
\times\sum_{\substack{n_i=-\infty\\i=1,2,3,4}}^{\infty}
\frac{(-1)^{n_1+n_2+n_3+n_4}}{\bigl((6n_1+1)^2+B(6n_2+1)^2+C(6n_3+1)^2+BC(6n_4+1)^2\bigr)^2}
\end{split}
\end{equation}
can be identified as the $L$-series $L(f,2)$ of the cusp form
$$
f(q)=\eta(q^A)\eta(q^{AB})\eta(q^{AC})\eta(q^{ABC}),
\quad A=\frac{24}{(B+1)(C+1)},
$$
whenever $A$ is an integer.

Many of the new formulas in this section, follow from integrating
`telescoping' modular equations.
The key is to search for identities which are similar to
\eqref{conductor 36 modular equation}, but which involve
additional terms.  When the telescoping terms are integrated,
their modularity properties can be used to evaluate them
explicitly. The following formula sets the grounds of this idea.

\begin{lemma}
\label{telescope}
For $r>0$ and $j>0$,
\begin{equation}\label{main telescoping formula}
\int_{0}^{1}\bigl(r^2c(q^{r})c(q^{rj})-c(q)c(q^j)\bigr)\log q\,\frac{\d q}{q}
=\frac{4\pi^2}{3j}\log r.
\end{equation}
\end{lemma}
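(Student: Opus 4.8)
The plan is to exploit the modular transformation behavior of the theta function $c(q)$ directly, reducing the integral to a computation involving the logarithmic derivative of an eta-quotient. Recall from \eqref{3theta} that $c(q)=3\eta^3(q^3)/\eta(q)$, so the integrand involves $c(q^r)c(q^{rj})=9\,\eta^3(q^{3r})\eta^3(q^{3rj})/(\eta(q^r)\eta(q^{rj}))$. The key observation is that the function $q\mapsto c(q)c(q^j)$ transforms nicely under $q\mapsto e^{-2\pi^2/(\log q\cdot\text{something})}$; more precisely, writing $q=e^{-2\pi t}$, the weight-$2$ modular form $c(q)c(q^j)$ (on $\Gamma_0(3j)$ up to the appropriate character) satisfies a functional equation relating $t$ to $1/(3jt)$, and likewise $r^2c(q^r)c(q^{rj})$ corresponds to the same form evaluated at $rt$. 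First I would substitute $q=e^{-2\pi t}$ to rewrite the left-hand side as $-4\pi^2\int_0^\infty \bigl(r^2c(e^{-2\pi rt})c(e^{-2\pi rjt})-c(e^{-2\pi t})c(e^{-2\pi jt})\bigr)\,t\,\d t$, then split the two pieces and rescale $t\mapsto t/r$ in the first, so the difference of the two integrals collapses to a single integral of $c(e^{-2\pi t})c(e^{-2\pi jt})\cdot t$ against the measure $\d t/t$ (from the $r^2$ and the Jacobian) over the region separating the two scalings.

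Concretely, after the substitution and rescaling, the left-hand side becomes $-4\pi^2\int_0^\infty g(t)\bigl(\tfrac1t\cdot t - t\bigr)$-type expression — more carefully, setting $g(t):=c(e^{-2\pi t})c(e^{-2\pi jt})$, the first integral is $\int_0^\infty g(t)\,t\,\d t$ after $t\mapsto t/r$ turns $r^2\,dt$ into $r\,dt$ and... I should be careful: the correct bookkeeping gives $\int_0^1(r^2c(q^r)c(q^{rj})-c(q)c(q^j))\log q\,\d q/q = -4\pi^2\int_0^\infty(g(rt)r - g(t))t\,\d t$, and substituting $t\mapsto t/r$ in the first term yields $-4\pi^2\int_0^\infty g(t)(t/r - t)\,\d t$, which does not obviously converge term-by-term but does as a difference. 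The clean way is to write it as $-4\pi^2\lim_{\epsilon\to0,N\to\infty}\bigl(\int_{\epsilon}^{N}g(rt)rt\,\d t - \int_{\epsilon}^{N}g(t)t\,\d t\bigr) = -4\pi^2\bigl(\int_{r\epsilon}^{rN} - \int_{\epsilon}^{N}\bigr)g(t)t\,\d t = -4\pi^2\bigl(\int_{rN}^{N} - \int_{r\epsilon}^{\epsilon}\bigr)(-g(t)t)\,\d t$, i.e. the integral is governed entirely by the boundary behavior of $g(t)t$ as $t\to0$ and $t\to\infty$.

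So the final step is a boundary/asymptotic analysis: I would use the modular transformation $c(e^{-2\pi t}) = \tfrac{1}{\sqrt3\,t}\,b(e^{-2\pi/(3t)})$ (which follows from the eta transformation law, or equivalently from the $a,b,c$ identities under $q\mapsto\tilde q$), together with the $q$-expansions $b(q)=1-3q+\cdots$ and $c(q)=3q^{1/3}(1+\cdots)$, to get $g(t)t\to \tfrac{1}{3j}$ as $t\to\infty$ (from $c(e^{-2\pi t})c(e^{-2\pi jt})\sim \tfrac{1}{3jt^2}b(\cdot)b(\cdot)\to\tfrac1{3jt^2}$) and $g(t)t\to 0$ exponentially as $t\to0$ (since $c(q)\to0$). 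Hence $-4\pi^2\bigl(\int_{rN}^{N}(-g(t)t)\,\d t\bigr)\to -4\pi^2\cdot\bigl(-\tfrac{1}{3j}\bigr)(\log N-\log rN)\cdot(-1)$... unwinding the signs, $\int_{rN}^{N}\tfrac{1}{3j}\tfrac{\d t}{t}\to\tfrac{1}{3j}\log\tfrac{1}{r}=-\tfrac{1}{3j}\log r$ is the wrong sign, so tracking carefully one finds the $t\to\infty$ boundary contributes $+\tfrac{4\pi^2}{3j}\log r$ and the $t\to0$ boundary contributes $0$, giving \eqref{main telescoping formula}. \textbf{The main obstacle} I anticipate is not any single hard estimate but the careful justification that the limiting rearrangement of divergent integrals is legitimate and the precise tracking of constants and signs through the substitution $q=e^{-2\pi t}$, the rescaling, and the modular transformation of $c$; the analytic input (exponential decay of $c(q)$ at $q=0$ and the $1/t$ modular factor at $t=0$) is standard, so the risk is purely in the bookkeeping. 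An alternative, perhaps cleaner, route avoiding the modular transformation entirely is to compute $\frac{d}{dr}$ of the left-hand side at general $r$, show it equals $\frac{4\pi^2}{3jr}$ by an integration-by-parts that telescopes the $q$-derivative, and then integrate in $r$ using that both sides vanish at $r=1$; I would try this first since it sidesteps convergence subtleties.
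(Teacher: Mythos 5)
Your strategy is in essence the paper's own: rescale so that the two terms telescope into boundary windows, then feed in the modular transformation of $c$ and the vanishing of $c(q)$ at $q=0$. (The paper does this directly in the variable $q$: the substitution $q\mapsto q^{1/r}$ collapses the difference to $\lim_{\delta\to1}\int_{\delta}^{\delta^r}c(q)c(q^j)\log q\,\frac{\d q}{q}$, a window at $q=1$ only, since convergence at $q=0$ is automatic from $c(q)=O(q^{1/3})$; then $c(q)=-\frac{2\pi}{\sqrt3\log q}\,b\bigl(e^{4\pi^2/(3\log q)}\bigr)$ with $b(0)=1$ gives the main term.) However, your execution has a genuine error: you attach the two asymptotic regimes to the wrong ends of the $t$-axis. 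With $q=e^{-2\pi t}$, the limit $t\to\infty$ corresponds to $q\to0$, where $c(q)=3q^{1/3}(1+\cdots)$ makes $g(t)t$ decay exponentially; the limit $t\to0^+$ corresponds to $q\to1^-$, where the transformation $c(e^{-2\pi t})=\frac1{\sqrt3\,t}\,b\bigl(e^{-2\pi/(3t)}\bigr)$ gives $g(t)\sim\frac1{3jt^2}$ (so $g(t)t\sim\frac1{3jt}$, not $\to\frac1{3j}$). Writing the difference of cutoff integrals as $\int_{N}^{rN}g(t)t\,\d t-\int_{\varepsilon}^{r\varepsilon}g(t)t\,\d t$, the window at infinity therefore tends to $0$, while the window at the origin tends to $\frac{\log r}{3j}$, and the prefactor $-4\pi^2$ turns this into exactly $\frac{4\pi^2}{3j}\log r$. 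Your closing assertion---that the $t\to\infty$ boundary carries the $\log r$ and the $t\to0$ boundary gives $0$---is exactly backwards, and the ``wrong sign'' you ran into is a symptom of this misattribution rather than something that tracks out at the infinite end; as written, that step fails.

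There is also a bookkeeping slip upstream: $q=e^{-2\pi t}$ turns $r^2c(q^r)c(q^{rj})$ into $r^2g(rt)$, not $r\,g(rt)$. With the correct factor, $\int_{\varepsilon}^{N}r^2g(rt)\,t\,\d t=\int_{r\varepsilon}^{rN}g(s)\,s\,\d s$ exactly, so your ``clean way'' identity $-4\pi^2\bigl(\int_{r\varepsilon}^{rN}-\int_{\varepsilon}^{N}\bigr)g(t)t\,\d t$ is in fact the right one (the formal cancellation of the two divergent integrals is precisely why the cutoffs must be kept), whereas your earlier display $-4\pi^2\int_0^\infty g(t)\bigl(t/r-t\bigr)\d t$ is not. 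Once the factor of $r$ and the two boundary regimes are corrected, your argument becomes a change-of-variables rewrite of the paper's proof; to finish one still needs the quantitative statement $c(e^{-2\pi t})=\frac1{\sqrt3\,t}\bigl(1+O(e^{-2\pi/(3t)})\bigr)$ as $t\to0^+$ (the paper's $1+O(1-q)$) so that the error over $[\varepsilon,r\varepsilon]$ vanishes. The alternative route you mention (differentiating in $r$ and integrating back from $r=1$) is only sketched and would in any case require the same modular input to control the $q\to1$ boundary terms.
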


We will illustrate the utility of this approach with an example.
Consider the following modular equation:
\begin{equation}\label{n=2 telescoping modular equation}
0=a(q)a(q^2)-b(q)b(q^2)-c(q)c(q^2).
\end{equation}
Formula \eqref{n=2 telescoping modular equation} is a $q$-version
of the second degree modular equation in Ramanujan's theory of
signature $3$ \cite[Theorem~2.6]{BoG}. Eliminating $a(q)$ and $a(q^2)$
with a standard relation, $a(q)=b(q)+3c(q^3)$, brings the equation
to
\begin{equation*}
3b(q)c(q^6)+3b(q^2)c(q^3)=-9c(q^3)c(q^6)+c(q)c(q^2).
\end{equation*}
Now multiply both sides by $(2\log q)/q$, and integrate for
$q\in(0,1)$. The left-hand side immediately reduces to values of
$H(x)$. Applying Lemma~\ref{telescope} with $r=3$ and $j=2$, yields
\begin{equation}\label{H(1/6) and H(2/3) relation}
\frac{1}{2}H\biggl(\frac{1}{6}\biggr)+2H\biggl(\frac{2}{3}\biggr)
=-\frac{4\pi^2}{3}\log3.
\end{equation}
Thus we have obtained a $\mathbb Q$-linear dependency between $H(1/6)$,
$H(2/3)$, and $\pi^2\log3$.  There are at least three variants
of \eqref{n=2 telescoping modular equation} which lead to formulas
for $L$-functions of elliptic curves.

\begin{proposition}\label{Lemma on special values of H}
The following relations are either proved or hold numerically:
{\allowdisplaybreaks
\begin{align}
\frac{4\pi^2}{3}\log3
&=-\frac{1}{x}H\biggl(\frac{1}{3x}\biggr)+3xH\biggl(\frac{x}{3}\biggr)
-3xH(x)+\frac{1}{x}H\biggl(\frac{1}{9x}\biggr), \label{H(x)
functional relation}
\\
\pi\sqrt{3}L(\chi_{-3},2)
&=-H\biggl(\frac{1}{3}\biggr),
\label{evaluation of H(1/3)}
\\
12L(E_{14},2)=12F(2,7)
&=-\frac{1}{14^2}H\biggl(\frac{1}{42}\biggr)-H\biggl(\frac{14}{3}\biggr)
\label{F(2,7) in terms of H}
\\ &\qquad
+\frac{1}{7^2}H\biggl(\frac{2}{21}\biggr)+\frac{1}{2^2}H\biggl(\frac{7}{6}\biggr),
\notag\\
9L(E_{15},2)=9F(3,5)
&=-\frac{1}{5^2}H\biggl(\frac{1}{15}\biggr)-H\biggl(\frac{5}{3}\biggr)-\frac{4\pi^2}{15}\log3,
\label{evaluation of F(3,5) in terms of H}
\\
24L(E_{20},2)=24F(1,5)
&\stackrel{?}{=}-\frac{2}{5^2}H\biggl(\frac{1}{15}\biggr)+2H\biggl(\frac{5}{3}\biggr)+\frac{4}{5^2}H\biggl(\frac{4}{15}\biggr)
\label{F(1,5) in terms of H}
\\ &\qquad
-4H\biggl(\frac{20}{3}\biggr)+\frac{3}{5}H\biggl(\frac{1}{3}\biggr),
\notag\\
9L(E_{24},2)=9F(2,3)
&=-\frac{1}{8^2}H\biggl(\frac{1}{24}\biggr)-H\biggl(\frac{8}{3}\biggr)-\frac{\pi^2}{6}\log3,
\label{F(2,3) in terms of H}
\\
9L(E_{27},2)=9F(1,3)
&=-H(1),
\label{F(1,3) in terms of H}
\\
9L(E_{36},2)=9F(1,1)
&=-H\biggl(\frac{4}{3}\biggr)+\frac{1}{4^2}H\biggl(\frac{1}{12}\biggr)
\label{F(1,1) in terms of H}
\\
&\stackrel{?}{=}2H\biggl(\frac{1}{3}\biggr)-2H\biggl(\frac{4}{3}\biggr),
\label{F(1,1) in terms of H conj}
\\
6L(E_{33},2)+4L(E_{11},2)&=12F(1,11)+\frac{9}{2}F(3,11)\label{F(1,11) and F(3,11) in terms of H}\\
&=-\frac{1}{11^2}H\biggl(\frac{1}{33}\biggr)-H\biggl(\frac{11}{3}\biggr)-\frac{4\pi^2}{33}\log3,\notag
\\
\frac{27}{16}F(3,7)
&=\frac{8}{7}H(1)-H(7)-\frac{1}{49}H\biggl(\frac{1}{7}\biggr),
\label{F(3,7) in terms of H}
\\
\frac{27}{49}F(6,7)
&=\frac{1}{49}H\biggl(\frac{2}{7}\biggr)+H(14)-\frac{8}{7}H(2),
\label{F(6,7) in terms of H}
\\
\frac{27}{25}F\biggl(\frac{3}{2},7\biggr)
&=\frac{2}{7}H\biggl(\frac{1}{2}\biggr)-\frac{1}{4}H\biggl(\frac{7}{2}\biggr)-\frac{1}{14^2}H\biggl(\frac{1}{14}\biggr),
\label{F(3/2,7) in terms of H}
\end{align}}%
where $\chi_{-3}$ in~\eqref{evaluation of H(1/3)} denotes the non-principal character $\mod3$.
\end{proposition}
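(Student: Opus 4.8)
The plan is to derive every relation in the proposition by the device already used to obtain~\eqref{H(1/6) and H(2/3) relation} and~\eqref{F(1,1) in terms of H}: write down an identity among the signature-$3$ theta functions $a,b,c$ (and, where an elliptic curve is involved, an eta quartic $\eta(q^A)\eta(q^{AB})\eta(q^{AC})\eta(q^{ABC})$), multiply through by $(2\log q)/q$, and integrate for $q\in(0,1)$. Three dictionary entries translate the integrated identity. First, substituting $q^b=p$ for $a,b>0$ gives
\begin{equation*}
\int_0^1 b(q^a)\,c(q^b)\,2\log q\,\frac{\d q}q=\frac2{b^2}\int_0^1 b(p^{a/b})\,c(p)\log p\,\frac{\d p}p=\frac6{b^2}\,H\!\Bigl(\frac ab\Bigr),
\end{equation*}
so each product $b(q^a)c(q^b)$ contributes the value $H(a/b)$ weighted by the rational number $6/b^2$. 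Secondly, when $A=24/((B+1)(C+1))$ is a positive integer the eta quartic equals $q+O(q^2)$, and integrating it against $-\log q\,\d q/q$ yields $L(f,2)=F(B,C)$ by the standard Mellin computation; for the conductors at hand this is the $L$-value of an elliptic curve. Thirdly, any surviving combination of the shape $r^2c(q^r)c(q^{rj})-c(q)c(q^j)$ --- this shape being dictated by the requirement that the leading $(1-q)^{-2}$ singularities of the two terms at the cusp $q=1$ cancel --- is evaluated by Lemma~\ref{telescope}, and this is the sole source of the $\pi^2\log3$ terms. Each relation thus becomes \emph{equivalent} to a concrete theta-function identity of prescribed shape, and all that is needed is to write it down and verify it.

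Two relations need no eta quartic at all. Since $b(q)c(q)=3\,\eta^2(q)\eta^2(q^3)$ is three times a weight-$2$ cusp form, \eqref{F(1,3) in terms of H} is immediate, with $H(1)=-9L(E_{27},2)$; this is the complex-multiplication seed from which $H$ was reverse-engineered. Entirely parallel: after $q\mapsto q^3$ one has $H(\tfrac13)=9\int_0^1 g(q)\log q\,\d q/q$ with $g=\tfrac13\,b(q)c(q^3)=\eta^3(q)\eta^3(q^9)/\eta^2(q^3)=\sum_{n\ge1}\chi_{-3}(n)\sigma_1(n)\,q^n$, a weight-$2$ Eisenstein series on $\Gamma_0(9)$ whose Dirichlet series is $L(\chi_{-3},s)L(\chi_{-3},s-1)$; hence $H(\tfrac13)=-9\,L(\chi_{-3},1)L(\chi_{-3},2)$, and inserting the classical value $L(\chi_{-3},1)=\pi/(3\sqrt3)$ gives~\eqref{evaluation of H(1/3)}. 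The functional relation~\eqref{H(x) functional relation} I would establish by elementary manipulation of the integrand $b(q^x)c(q)$, using the standard signature-$3$ identities $a(q)=b(q)+3c(q^3)$, $b(q)=a(q^3)-c(q^3)$, $a(q)=a(q^3)+2c(q^3)$ (applied at various powers of~$q$) together with Lemma~\ref{telescope}; it is the functional equation that will let us move between the various arguments of $H$.

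For the remaining elliptic-curve relations~\eqref{F(2,7) in terms of H}, \eqref{evaluation of F(3,5) in terms of H}, \eqref{F(2,3) in terms of H}, \eqref{F(1,1) in terms of H}, \eqref{F(1,11) and F(3,11) in terms of H}, and for~\eqref{F(3,7) in terms of H}, \eqref{F(6,7) in terms of H}, \eqref{F(3/2,7) in terms of H} (where $A\notin\mathbb Z$, so the relevant $F$'s appear only in rational combinations), the task is in each case to exhibit a telescoping modular equation: a rational multiple of the appropriate eta quartic, or a rational combination of such, written as a rational combination of products $b(q^{a_i})c(q^{b_i})$ plus a rational multiple of $r^2c(q^r)c(q^{rj})-c(q)c(q^j)$, with the ratios $a_i/b_i$ and the $b_i$ read off from the right-hand side of the target relation via the dictionary above. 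For conductor $36$ the identity is already recorded, namely~\eqref{conductor 36 modular equation}, and integrating it reproduces~\eqref{F(1,1) in terms of H}; for conductor~$15$ --- the case feeding the main theorem through~\eqref{F(3,5) in terms of H intro} --- one seeks an identity expressing $\eta(q)\eta(q^3)\eta(q^5)\eta(q^{15})$ via $b(q^5)c(q^3)$ and $b(q)c(q^{15})$ together with one telescoping term at $r=3$, the two $b\cdot c$ terms being forced to enter with equal coefficients because the target ratio $25:1$ of their $H$-coefficients is exactly the ratio $6/3^2:6/15^2$ supplied by the dictionary. Conductors $14$ and $24$ were handled along these lines in~\cite{RZ}, and conductor $11$ --- hence $33$, through~\eqref{F(1,11) and F(3,11) in terms of H} --- similarly. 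Once a candidate identity has been guessed from a short $q$-expansion, verifying it is routine: both sides are modular forms of known weight on a fixed $\Gamma_0(N)$, so by the Sturm bound it suffices to match finitely many Fourier coefficients.

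The real difficulty is therefore not verification but \emph{discovery}: for each curve one must produce an identity that is at once a true identity of modular forms and ``telescoping'', in the sense that everything not absorbed by an eta quartic or a single $b(q^{a})c(q^{b})$ term assembles into differences $r^2c(q^r)c(q^{rj})-c(q)c(q^j)$ accessible to Lemma~\ref{telescope}. This is exactly the point at which the two question-marked relations stall: for the conductor-$20$ relation~\eqref{F(1,5) in terms of H} and for the alternative conductor-$36$ formula~\eqref{F(1,1) in terms of H conj} no such telescoping identity --- and no chain of the known identities routed through~\eqref{H(x) functional relation} --- has been found, even though both hold numerically to high precision; indeed, equating~\eqref{F(1,1) in terms of H} with~\eqref{F(1,1) in terms of H conj} would already yield the still-open $\mathbb Q$-linear relation $16\,H(\tfrac43)+H(\tfrac1{12})=32\,H(\tfrac13)$. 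Consequently the proposition will be proved for every relation other than~\eqref{F(1,5) in terms of H} and~\eqref{F(1,1) in terms of H conj}, which are verified only numerically.
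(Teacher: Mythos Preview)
Your approach is correct and matches the paper's: the same dictionary from $b(q^a)c(q^b)$ to $H(a/b)$, the same use of Lemma~\ref{telescope} for the $\pi^2\log3$ terms, the same Eisenstein-series treatment of $H(1/3)$, and the same acknowledgement that \eqref{F(1,5) in terms of H} and \eqref{F(1,1) in terms of H conj} remain only numerical. The paper's one addition to what you outline is a concrete answer to your ``discovery'' problem: for conductors $15$, $24$, and $11/33$ the required identities are Ramanujan's signature-$3$ modular equations $a(q)a(q^n)-b(q)b(q^n)-c(q)c(q^n)=9\,(\text{eta product})$ for $n=5,8,11$ (recorded in~\cite{Be5}), which after the substitution $a(q)=b(q)+3c(q^3)$ automatically acquire the telescoping $c(\cdot)c(\cdot)$ shape you describe; the cases \eqref{F(1,3) in terms of H}, \eqref{F(1,1) in terms of H}, \eqref{F(3,7) in terms of H}--\eqref{F(3/2,7) in terms of H} are deferred to \cite[Lemma~1]{RZ} (not conductors $14$ and $24$ as you suggest), and the proof of \eqref{F(2,7) in terms of H} is omitted as too lengthy.
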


\begin{proof}
We will begin by proving formulas \eqref{evaluation of F(3,5) in
terms of H}, \eqref{F(2,3) in terms of H} and \eqref{F(1,11) and
F(3,11) in terms of H}.  The proofs follow in exactly the same
manner as the proof of \eqref{H(1/6) and H(2/3) relation}, using
the telescoping identity of Lemma~\ref{telescope}. The
relevant modular equations are due to Ramanujan:
\begin{align*}
a(q)a(q^5)-b(q)b(q^5)-c(q)c(q^5)
&=9\eta(q)\eta(q^3)\eta(q^5)\eta(q^{15}),
\\
a(q)a(q^8)-b(q)b(q^8)-c(q)c(q^8)
&=9\eta(q^2)\eta(q^4)\eta(q^6)\eta(q^{12}),
\\
a(q)a(q^{11})-b(q)b(q^{11})-c(q)c(q^{11})
&=9\eta^2(q)\eta^2(q^{11})+27\eta^2(q^3)\eta^2(q^{33})
\\ &\qquad
+18\eta(q)\eta(q^{3})\eta(q^{11})\eta(q^{33}).
\end{align*}
The first modular equation is equivalent to \cite[pg.~125, Entry 7.20]{Be5}.
The equivalence follows from using product expansions
$b(q)=\eta^3(q)/\eta(q^3)$,
$c(q)=3\eta^3(q^3)/\eta(q)$, and the cubic relation
$a(q)=(b^3(q)+c^3(q))^{1/3}$. The second result follows from
\cite[pg.~129, Entry 7.40]{Be5}, and the third follows from
\cite[pg.~127, Entry 7.30]{Be5}.  The identification of
$L(E_{33},2)$ in terms of $F(1,11)$ and $F(3,11)$, follows from
integrating the associated cusp form \cite{Pat}.

In order to prove \eqref{evaluation of H(1/3)}, we can use
\cite[pg.~217, Entry~14.6]{GK}, to obtain
\begin{equation*}
\frac{1}{3}b(q)c(q^3)=\sum_{n,k=1}^{\infty}k\chi_{-3}(n k) q^{nk}.
\end{equation*}
Multiplying by $(\log q)/q$ and integrating for $q\in(0,1)$ on either side,
we have
\begin{equation*}
\begin{split}
\frac{1}{3^2}H\biggl(\frac{1}{3}\biggr)
=-L(\chi_{-3},1)L(\chi_{-3},2)
=-\frac{\pi}{3\sqrt{3}}L(\chi_{-3},2).
\end{split}
\end{equation*}

The proof of \eqref{H(x) functional relation} follows from
integrating the following identity:
\begin{equation*}
\bigl(b(q^{1/9})-b(q^{1/3})\bigr)c(q^x)+3\bigl(b(q^{x/3})-b(q^x)\bigr)c(q)
=9c(q^{3x})c(q)-c(q^x)c(q^{1/3}),
\end{equation*}
and then applying \eqref{main telescoping formula} to the right-hand
side.  This identity can be verified by eliminating $b(q)$ with
$$
b(q^{1/3})-b(q)=3c(q^3)-c(q).
$$
The simpler identity between $b$ and $c$ is a consequence
of \cite[pg.~93, Entry 2.8]{Be5} and \cite[pg.~94, Entry 2.9]{Be5}.

Finally, identities \eqref{F(1,3) in terms of H},
\eqref{F(1,1) in terms of H}, \eqref{F(3,7) in terms of H}
\eqref{F(6,7) in terms of H}, and \eqref{F(3/2,7) in terms of H}
are examined in~\cite[Lemma~1]{RZ}.
\end{proof}

\begin{proof}[Proof of Lemma~\textup{\ref{telescope}}]
Let us denote the left-hand side of~\eqref{main telescoping
formula} by~$I_{j,r}$.  Notice that
\begin{equation*}
I_{j,r}=\lim_{\delta\to1}\biggl(\int_{0}^{\delta}r^2c(q^{r})c(q^{r j})\log q\,\frac{\d q}{q}
-\int_{0}^{\delta}c(q)c(q^j)\log q\,\frac{\d q}{q}\biggr).
\end{equation*}
The rearrangement is justified because $c(q)=O(q^{1/3})$ as $q\to 0^+$.
Performing a $u$-substitution brings the difference to
\begin{equation*}
I_{j,r}=\lim_{\delta\to1}\int_{\delta}^{\delta^r}c(q)c(q^{j})\log q\,\frac{\d q}{q}.
\end{equation*}
It is known that $b(q)$ and $c(q)$ are linked by the modularity relation
\begin{equation*}
c(q)=-\frac{2\pi}{\sqrt{3}\log q} b\bigl(e^{(4\pi^2)/(3\log q)}\bigr).
\end{equation*}
When $q\to1^{-}$ it is easy to see that
$e^{(4\pi^2)/(3\log q)}\to 0$.  Since $b(0)=1$, we estimate
\begin{equation*}
c(q)=-\frac{2\pi}{\sqrt{3}\log q}\bigl(1+O(1-q)\bigr)
\quad\text{as $q\to1^-$}.
\end{equation*}
Substituting for $c(q)$ and $c(q^j)$ reduces the integral to
\begin{equation*}
\begin{split}
I_{j,r}
&=\lim_{\delta\to1}\frac{4\pi^2}{3j}\int_{\delta}^{\delta^r}\frac{1}{\log q}\bigl(1+O(1-q)\bigr)\frac{\d q}{q}
\\
&=\frac{4\pi^2}{3j}\log r+\lim_{\delta\to 1}\int_{\delta}^{\delta^r}O\biggl(\frac{1-q}{q\log q}\biggr)\d q
\\
&=\frac{4\pi^2}{3j}\log r.
\end{split}
\end{equation*}
The error term is the tail of the convergent integral
$$
\int_{1/2}^{1}\frac{1-q}{q\log q}\,\d q,
$$
and vanishes as $\delta\to1$. This concludes the proof of~\eqref{main
telescoping formula}.
\end{proof}

To finish this discussion, we will emphasize the fact that formula
\eqref{evaluation of F(3,5) in terms of H} is the real prize of
Proposition~\ref{Lemma on special values of H}.   That formula
provides one of the keys to solving the conductor $15$ conjecture.
Formula \eqref{F(2,7) in terms of H} also seems promising, however
we basically ignored the identity in our analysis, because Mellit
has already proved Boyd's conjectures for conductor $14$
curves~\cite{Me}. The proof of \eqref{F(2,7) in terms of H} is
also quite difficult, and as a result we have chosen not to
present it here. This brings us to the numerical identities.  We
were disappointed that we could not isolate the conductor $11$
case, since that $L$-value appears frequently in Boyd's tables.
Equation \eqref{F(1,5) in terms of H} is the most interesting
formula that we were able to conjecture, since it involves the
conductor $20$ elliptic curve. We discovered \eqref{F(1,5) in
terms of H} with the PSLQ algorithm, and were subsequently unable
to prove it. The problem is that there is no obvious way to relate
integrals of $\eta^2(q^2) \eta^2(q^{10})$ to values of $H(x)$.  We
performed an extensive, but ultimately futile, search of
Ramanujan's formulas \cite{Be5} and Somos's identities
\cite{so2}.  Eventually we chose to bypass the problem, because
Boyd's conductor $20$ conjectures are already proved~\cite{RZ}.

It seems likely that a more extensive search will turn up many
additional results.  Our primary goal was to find formulas for the
lattice sums $F(B,C)$ defined in~\eqref{F(b,c)}.  Since the vast
majority of elliptic curves have a value of $L(E,2)$ which is
(presumably) linearly independent from $F(B,C)$ over~$\mathbb Q$,
our search would not have addressed those cases. There are also
many additional $\mathbb Q$-linear dependencies between values of
$H(x)$. The majority of these formulas are probably insignificant,
and we expect that most of them can be proved with a telescoping
recipe. For instance, we calculated
\begin{align*}
5H(1)&\stackrel{?}{=}4H(4)+\frac{1}{4}H\biggl(\frac{1}{4}\biggr),
\\
3H(2)&\stackrel{?}{=}4H\biggl(\frac{2}{3}\biggr)+\frac{1}{4}H\biggl(\frac{1}{18}\biggr).
\end{align*}
The only continuous identity that we discovered was \eqref{H(x)
functional relation}.  Perhaps it is noteworthy that this
functional relation can be combined with our other results, to
either prove or conjecture explicit formulas for $H(1)$, $H(1/3)$,
$H(2/3)$, $H(1/6)$, and $H(1/9)$.

\section{A new integral for $H(x)$}
\label{sec3}

One of our main theorems in~\cite{RZ}, is that it is always
possible to express $H(x)$ as an integral of elementary functions.
Suppose that $x>0$, and assume that $\beta$ has degree $x$ over
$\alpha$ in the theory of signature~$3$. Then it was proved that
\begin{equation}\label{H reduction integral 1}
x H\biggl(\frac{x}{3}\biggr)
=\frac{2\pi}{\sqrt{3}}
\int_{0}^{1}\frac{(1-\alpha)^{1/3}\bigl(1-(1-\alpha)^{1/3}\bigr)}{\alpha(1-\alpha)}
\,\log\frac{1-(1-\beta)^{1/3}}{\beta^{1/3}}\,\d\alpha.
\end{equation}
We say that $\beta$ has degree $x$ over $\alpha$ in signature~$3$,
if $\alpha$ and $\beta$ can be parameterized by
\begin{equation*}
\alpha=\frac{c^3(q)}{a^3(q)}, \quad
\beta=\frac{c^3(q^x)}{a^3(q^x)},
\end{equation*}
where $a(q)$ and $c(q)$ are the signature~$3$ theta functions~\eqref{3theta}.
The existence of signature~$3$ modular equations is a consequence of
the classical theory of modular forms.  If $q=e^{2\pi i \tau}$,
then $\alpha$ and $\beta$ are algebraic functions of $j(\tau)$ and
$j(x \tau)$, and therefore satisfy an algebraic relation whenever
$x\in\mathbb{Q}\cap(0,\infty)$.

\begin{proposition}\label{H Theorem}
Suppose that $x>0$.  Then we have
\begin{align}
\label{H main theorem}
&
xH\biggl(\frac{x}{3}\biggr)+\frac{1}{x}H\biggl(\frac{1}{3x}\biggr)+2H\biggl(\frac{1}{3}\biggr)
\\ &\quad
=4\pi\int_{v\in[0,1]}\log\biggl(\frac{1-R+v}{u}\biggr)
\d\arctan\biggl(\frac{\sqrt{3}(1+R)}{1-R-2v}\biggr),
\nonumber
\end{align}
where $R^3-3v R-v^3=1-u^3$.  There is another algebraic relation
between $u$ and $v$ whenever $x\in \mathbb{Q}\cap(0,\infty)$. This
relation is induced by the parameterizations
$u=(\alpha\beta)^{1/3}$ and
$v=\bigl((1-\alpha)(1-\beta)\bigr)^{1/3}$, where $\beta$ has
degree~$x$ over $\alpha$ in signature~$3$. The following table
lists the relation for the first few cases:
\begin{equation*}
    \begin{tabular}{|c|c|c|c|p{6 in}|}
        \hline
        $x$ &  $\text{algebraic relations between $u$ and $v$}$ \\
        \hline
        $2$ & $u+v-1$\\
        $5$ & $(u+v-1)^2-9u v$ \\
        $8$ & $(u+v-1)^4+9u v(4u+4v+5)(u+v-1)-162 u^2 v^2$\\
        $11$ & $u+v+6\sqrt{u v}+3\sqrt{3}\sqrt[4]{u v}(\sqrt{u}+\sqrt{v})-1$\\
        \hline
    \end{tabular}
\end{equation*}
\end{proposition}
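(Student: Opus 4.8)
The plan is to write each of the three terms on the left of \eqref{H main theorem} as an instance of the integral \eqref{H reduction integral 1} and then to add the integrands. Fix $x>0$ and let $\beta$ have degree $x$ over $\alpha$ in signature~$3$, so that $(\alpha,\beta)=(\alpha(q),\beta(q))$ traces a real algebraic arc $\mathcal C_x$ as $q$ runs over $(0,1)$, with $\alpha,\beta$ increasing from $0$ to $1$. Formula \eqref{H reduction integral 1} gives $xH(x/3)$ as an integral over $\mathcal C_x$ using $\alpha$ as parameter. Replacing $x$ by $1/x$ in \eqref{H reduction integral 1}, and observing that ``$\beta$ has degree $1/x$ over $\alpha$'' simply interchanges the two variables, presents $\frac1xH\bigl(\frac1{3x}\bigr)$ as an integral over the \emph{same} arc $\mathcal C_x$ but with $\beta$ as parameter and with the algebraic and logarithmic factors swapped. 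Finally, the degenerate case $x=1$ (where $\beta=\alpha$) of \eqref{H reduction integral 1} expresses $H(1/3)$, which I use twice --- once read as an integral in $\alpha$, once in $\beta$ --- to account for the factor $2$.

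Next I would substitute $s=(1-\alpha)^{1/3}$, $t=(1-\beta)^{1/3}$ and set $u=(\alpha\beta)^{1/3}$, $v=\bigl((1-\alpha)(1-\beta)\bigr)^{1/3}=st$, $R=s+t$. A one-line computation gives $R^3-3vR-v^3=s^3+t^3-s^3t^3=1-(1-s^3)(1-t^3)=1-u^3$, which is exactly the stated relation. Grouping $xH(x/3)$ with one copy of $H(1/3)$, the sum of the two logarithms collapses to $\log\bigl((1-s)(1-t)\bigr)=\log(1-R+v)$, while the algebraic prefactor $\tfrac{2\pi}{\sqrt3}\,\frac{(1-\alpha)^{1/3}(1-(1-\alpha)^{1/3})}{\alpha(1-\alpha)}\,\d\alpha$ simplifies under $\alpha=1-s^3$ to $-2\sqrt3\,\pi\,\frac{\d s}{1+s+s^2}$; the other group behaves identically in $t$. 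This yields
\begin{equation*}
xH\Bigl(\frac x3\Bigr)+\frac1xH\Bigl(\frac1{3x}\Bigr)+2H\Bigl(\frac13\Bigr)
=-2\sqrt3\,\pi\int_{\mathcal C_x}\log\Bigl(\frac{1-R+v}{u}\Bigr)\Bigl(\frac{\d s}{1+s+s^2}+\frac{\d t}{1+t+t^2}\Bigr).
\end{equation*}

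To finish, note $\frac{\d s}{1+s+s^2}=\frac2{\sqrt3}\,\d\arctan\frac{2s+1}{\sqrt3}$, and that $\arctan\frac{2s+1}{\sqrt3}+\arctan\frac{2t+1}{\sqrt3}$ has tangent $\bigl(\tfrac{2s+1}{\sqrt3}+\tfrac{2t+1}{\sqrt3}\bigr)\big/\bigl(1-\tfrac{(2s+1)(2t+1)}{3}\bigr)=\frac{\sqrt3(1+R)}{1-R-2v}$, so the two arctangent differentials combine into $\frac2{\sqrt3}\,\d\arctan\frac{\sqrt3(1+R)}{1-R-2v}$. The right-hand side above therefore equals $-4\pi\int_{\mathcal C_x}\log\frac{1-R+v}{u}\,\d\arctan\frac{\sqrt3(1+R)}{1-R-2v}$. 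Since $\alpha,\beta$ run from $0$ to $1$ the quantity $v=st$ runs from $1$ to $0$ along $\mathcal C_x$, so rewriting the path as $v\in[0,1]$ reverses orientation and turns the sign into $+4\pi$, giving \eqref{H main theorem}; one also checks that the integrand vanishes at $\alpha=1$ (where $u=1$, $v=R=0$) and has only an integrable logarithmic singularity at $\alpha=0$. This last bookkeeping is, I expect, the main obstacle: the $\arctan$ addition law holds only modulo $\pi$, so one must follow the branch of $\arctan\frac{\sqrt3(1+R)}{1-R-2v}$ across the sign change of $1-R-2v$, and verify that collapsing the product of logarithms and reversing the orientation loses no boundary term.

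For the final assertion, observe that $u^3=\alpha\beta$ and $v^3=(1-\alpha)(1-\beta)$ determine the elementary symmetric functions of $\alpha,\beta$ --- indeed $\alpha+\beta=1+u^3-v^3$ --- so $\alpha$ and $\beta$ are the two roots of $T^2-(1+u^3-v^3)\,T+u^3$. The signature-$3$ modular equation of degree $x$ relating $\alpha$ and $\beta$ is, or can be symmetrized to be, a polynomial in $\alpha+\beta$ and $\alpha\beta$, hence in $u^3$ and $v^3$; substituting and clearing produces the algebraic relation between $u$ and $v$. Inserting the classical degree $2,5,8$ modular equations of Ramanujan's cubic theory yields the first three rows of the table; the degree $11$ equation is not symmetric in $\alpha,\beta$, which is reflected in the radicals $\sqrt{uv}$ and $\sqrt[4]{uv}$ appearing in the last row. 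By comparison with the preceding step these algebraic manipulations are routine.
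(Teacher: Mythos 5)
Your proposal is correct and follows essentially the same route as the paper: both arguments start from \eqref{H reduction integral 1}, use the $x\mapsto1/x$ symmetry to swap $\alpha$ and $\beta$, pair each of the two integrals with a copy of the $x=1$ case so the logarithms combine into $\log\bigl((1-R+v)/u\bigr)$, merge the two arctangent differentials by the addition formula, and then reparameterize by $v$ (reversing orientation), with the table coming from Ramanujan's signature-$3$ modular equations. Your explicit handling of the substitution $s=(1-\alpha)^{1/3}$ and of the $\arctan$ branch across the zero of $1-R-2v$ is if anything more careful than the paper's own write-up.
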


\begin{proof}
First notice \eqref{H reduction integral 1} simplifies to
\begin{equation*}
xH\biggl(\frac{x}{3}\biggr)
=-4\pi\int_{0}^{1}\log\frac{1-(1-\beta)^{1/3}}{\beta^{1/3}}\,\d\arctan\frac{1+2(1-\alpha)^{1/3}}{\sqrt{3}}.
\end{equation*}
If we set $x=1$, then $\alpha=\beta$, and we obtain an integral
for $H(1/3)$. Add the two formulas together, and notice that
\begin{equation*}
\log\frac{(1-(1-\alpha)^{1/3})(1-(1-\beta)^{1/3})}{(\alpha\beta)^{1/3}}
=\log\biggl(\frac{1-R+v}{u}\biggr),
\end{equation*}
where $u=(\alpha\beta)^{1/3}$, $v=\bigl((1-\alpha)(1-\beta)\bigr)^{1/3}$, and
$R=(1-\alpha)^{1/3}+(1-\beta)^{1/3}$.  Notice that $R^3-3vR-v^3=1-u^3$.
The identity becomes
\begin{equation*}
x H\biggl(\frac{x}{3}\biggr)+H\biggl(\frac{1}{3}\biggr)
=-4\pi\int_{\alpha\in[0,1]}\log\biggl(\frac{1-R+v}{u}\biggr)
\d\arctan\frac{1+2(1-\alpha)^{1/3}}{\sqrt{3}}.
\end{equation*}
The transformation $x\mapsto 1/x$ swaps $\alpha$ and
$\beta$ in the integral.  The limits of integration are unchanged,
because $\alpha=0$ when $\beta=0$, and $\alpha=1$ when $\beta=1$.
The function inside the logarithm is unchanged, because it is
symmetric in $\alpha$ and~$\beta$. The integral becomes
\begin{equation*}
\frac{1}{x}H\biggl(\frac{1}{3x}\biggr)+H\biggl(\frac{1}{3}\biggr)
=-4\pi\int_{\alpha\in[0,1]}\log\biggl(\frac{1-R+v}{u}\biggr)
\d\arctan\frac{1+2(1-\beta)^{1/3}}{\sqrt{3}}.
\end{equation*}
Now add the formulas for $H(x/3)$ and $H(1/(3x))$,
and use the addition formula for $\arctan z$ to complete the proof of~\eqref{H main theorem}.
Notice that $v\in[1,0]$ when $\alpha\in[0,1]$, and $v$~is
monotone, therefore we can express the limits of integration in
terms of~$v$.

The specific algebraic relations between $u$ and $v$ are
equivalent to modular equations in Ramanujan's theory of signature~$3$.
The second degree modular equation \cite[pg.~120, Theorem~7.1]{Be5} shows that
\begin{equation*}
(\alpha\beta)^{1/3}+\bigl((1-\alpha)(1-\beta)\bigr)^{1/3}=1,
\end{equation*}
which is equivalent to $u+v-1=0$.  The fifth degree modular
equation \cite[pg.~124, Theorem~7.6]{Be5} shows that
\begin{equation*}
(\alpha\beta)^{1/3}+\bigl((1-\alpha)(1-\beta)\bigr)^{1/3}+3\bigl(\alpha\beta(1-\alpha)(1-\beta)\bigr)^{1/6}=1;
\end{equation*}
this is equivalent to $(u+v-1)^2-9uv=0$. Finally, cases $x=8$ and
$x=11$ follow from \cite[pg.~132, Theorem~7.11]{Be5} and
\cite[pg.~126, Theorem~7.8]{Be5}, respectively.
\end{proof}

\section{Simplification for $x=2$}
\label{sec4}

Before we attack the conductor $15$ conjecture, we will briefly
examine the much easier case when $x=2$.  Notice that we have
already evaluated the left-hand side of~\eqref{H main theorem} in
formula \eqref{H(1/6) and H(2/3) relation}.  In fact, the
following analysis will recover the correct identity.  When $x=2$
the relations between $u$, $v$, and $R$ are given by
\begin{equation*}
R^3-3v R-v^3=1-u^3, \quad u+v=1.
\end{equation*}
Therefore we have a genus~0 curve relating $v$ and~$R$:
\begin{equation*}
R^3-3v R-v^3=1-(1-v)^3.
\end{equation*}
\texttt{Maple} produces the following rational parameterizations:
\begin{equation*}
R=\frac{(t-1)(2t^2-t+2)}{t^3+2}, \quad v=\frac{(t-1)^3}{t^3+2}.
\end{equation*}
If $v\in [0,1]$, then $t\in[1,\infty)$. Therefore the integral
becomes
\begin{equation*}
\begin{split}
2H\biggl(\frac{2}{3}\biggr)+\frac{1}{2}H\biggl(\frac{1}{6}\biggr)+2H\biggl(\frac{1}{3}\biggr)
&=4\pi\int_{1}^{\infty}\log\frac{1}{1-t+t^2}\,\d\arctan\frac{\sqrt{3}t}{2-t}
\\
&=-2\pi\sqrt{3}\int_{1}^{\infty}\frac{\log(1-t+t^2)}{1-t+t^2}\,\d t
\\
&=-\frac{4\pi^2}{3}\log3-2\pi\sqrt{3}L(\chi_{-3},2).
\end{split}
\end{equation*}
\texttt{Mathematica} evaluated the final integral after we made
the substitution $t=(1+\sqrt{3}\tan u)/2$. We can
eliminate $H(1/3)$ by appealing to~\eqref{evaluation of H(1/3)},
and the identity finally reduces to~\eqref{H(1/6) and H(2/3)
relation}.

\section{Simplification for $x=5$}
\label{sec5}

Now we will find a formula for the conductor $15$ elliptic curve.
The ultimate goal of the following discussion is to obtain
Proposition~\ref{Conductor 15 reduction theorem} and formula~\eqref{elementary integral reduction part 1} below.

When $x=5$ we can use \eqref{H main theorem}, \eqref{evaluation of H(1/3)} and
\eqref{evaluation of F(3,5) in terms of H} to write
\begin{equation*}
45L(E_{15},2)+2\pi\sqrt{3}L(\chi_{-3},2)+\frac{4\pi^2}{3}\log3
=-4\pi\int_{v\in[0,1]}\log x\,\d\arctan(\sqrt{3}y),
\end{equation*}
where
\begin{equation*}
x=\frac{1-R+v}{u}, \quad y=\frac{1+R}{1-R-2v}.
\end{equation*}
The algebraic relations between $u$, $v$, and $R$ are given by
\begin{equation*}
R^3-3v R-v^3=1-u^3, \quad
(u+v-1)^2-9u v=0.
\end{equation*}
Eliminating $u$, $v$, and $R$ with successive resultants leads to
a relation between $x$ and~$y$:
\begin{equation*}
\begin{split}
0&=(1+x+x^2)(1-15x+9x^2)+(4+20x-12x^2)y
\\ &\quad
+(6-44x-18x^2-36x^3+54x^4)y^2
+(4+60x-36x^2)y^3
\\ &\quad
+(1-9x+9x^2)(1+3x+9x^2)y^4.
\end{split}
\end{equation*}
According to \texttt{Maple} this relation defines an elliptic
curve. It is therefore possible to parameterize $x$ and $y$ by the
Weierstrass coordinates of an elliptic curve. Assisted by \texttt{Maple}'s
\emph{Weierstrassform} routine we discovered the following parametric formulas:
\begin{align}
x&=\frac{(1-t)^2(3t+t^2-\sqrt{3}\sqrt{-3+t^2+2t^3})}{(3+t^2)^2},
\label{X parametric}\\
y&=\frac{(1+t)(3-6t-t^2-2\sqrt{3}\sqrt{-3+t^2+2t^3})}{(3-t)(3+t^2)}.
\label{Y parametric}
\end{align}
Notice that if $v\in[0,1]$, then $x\in[1,0]$ and
$t\in(\infty,1]$. We have the following proposition.

\begin{proposition}\label{Conductor 15 reduction theorem}
The following formula is true:
\begin{align}\label{conductor 15 elementary integral}
&
45L(E_{15},2)+2\pi\sqrt{3}L(\chi_{-3},2)+\frac{4\pi^2}{3}\log(3)
\\ &\quad
=4\pi\int_{1}^{\infty}\log\frac{(1-t)^2(3t+t^2-\sqrt{3}\sqrt{-3+t^2+2t^3})}{(3+t^2)^2}
\nonumber
\\ &\quad\qquad\times
\d\arctan\frac{\sqrt{3}(1+t)(3-6t-t^2-2\sqrt{3}\sqrt{-3+t^2+2t^3})}{(3-t)(3+t^2)}.
\nonumber
\end{align}
\end{proposition}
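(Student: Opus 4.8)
The plan is to start from the already-established identity immediately preceding the statement,
\begin{equation*}
45L(E_{15},2)+2\pi\sqrt{3}L(\chi_{-3},2)+\frac{4\pi^2}{3}\log3
=-4\pi\int_{v\in[0,1]}\log x\,\d\arctan(\sqrt{3}y),
\end{equation*}
with $x=(1-R+v)/u$ and $y=(1+R)/(1-R-2v)$ subject to $R^3-3vR-v^3=1-u^3$ and $(u+v-1)^2-9uv=0$. The goal is then purely a change of variables: substitute the rational-in-$t$-and-radical parameterizations \eqref{X parametric}, \eqref{Y parametric} and check that the domain $v\in[0,1]$ corresponds to $t\in[1,\infty)$ (with the orientation reversal absorbed into the sign, since $v\in[1,0]$ as $\alpha$ runs over $[0,1]$, as observed in the proof of Proposition~\ref{H Theorem}). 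Concretely, I would verify three things: (i) that \eqref{X parametric} and \eqref{Y parametric} do satisfy the quartic relation between $x$ and $y$ displayed just before the statement; (ii) that, on the branch of the square root $\sqrt{-3+t^2+2t^3}$ being used, the pair $(x,y)$ traces exactly the arc coming from $v\in[0,1]$ as $t$ runs over $[1,\infty)$, so that after the substitution the integral $-4\pi\int_{v\in[0,1]}\log x\,\d\arctan(\sqrt{3}y)$ becomes $+4\pi\int_{1}^{\infty}\log x\,\d\arctan(\sqrt3 y)$ with $x,y$ now the explicit functions of $t$; and (iii) that the endpoint contributions (at $t=1$, where $x\to 0$, and as $t\to\infty$) are harmless, i.e. the integral converges and no boundary term from the substitution is dropped.

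The bulk of the work is step (i), which is in principle a finite symbolic computation: one has the curve in the $(x,y)$-plane obtained by eliminating $u,v,R$ via successive resultants, and one has \texttt{Maple}'s Weierstrass-form output \eqref{X parametric}--\eqref{Y parametric}; substituting the latter into the former and simplifying to $0$ confirms the parameterization. I would present this as a verification ("one checks directly that \eqref{X parametric} and \eqref{Y parametric} satisfy the quartic relation") rather than reproduce the algebra, since it is exactly the kind of resultant computation the paper has already delegated to \texttt{Maple}. The only genuine subtlety here is the choice of sign for the radical: the two sheets $\pm\sqrt{-3+t^2+2t^3}$ give the two formulas one would naively expect, and one must pin down which sheet is glued to the arc $v\in[0,1]$. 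I would do this by evaluating at one convenient point — e.g. checking that $t=1$ gives $x=0$ (indeed $(1-t)^2=0$ there), consistent with $v=0\Rightarrow\alpha=0$ or $\beta=0\Rightarrow u=0\Rightarrow x=0$ — and by matching a second point, say the behavior as $t\to\infty$ against $v\to1$, to fix the branch uniquely and confirm the orientation of the arc.

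I expect the main obstacle to be precisely this branch/orientation bookkeeping in step (ii): the map $v\mapsto t$ is implicitly defined, the radical has two sheets, and $\arctan$ is multivalued, so one must be careful that $\d\arctan(\sqrt3 y)$ is interpreted as the continuous branch of the angle along the arc (not as the principal-value $\arctan$ applied pointwise, which can jump when $1-R-2v$ changes sign). The cleanest way to handle this is to note, as in the earlier propositions, that the integrand $\d\arctan(\cdot)$ is really $d\theta$ for the continuous angle $\theta$ of a point moving on the curve, so the substitution is legitimate as long as the path $t\in[1,\infty)$ is the image of the path $v\in[0,1]$ under a homeomorphism — which (ii) establishes. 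Once the arc and branch are fixed, formula \eqref{conductor 15 elementary integral} is just the rewritten integral, and the proof is complete; the deeper evaluation of this elementary integral is deferred to the subsequent sections.
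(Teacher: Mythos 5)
Your overall plan is the paper's own: Proposition~\ref{Conductor 15 reduction theorem} is just the display preceding it (obtained from \eqref{H main theorem} at $x=5$ combined with \eqref{evaluation of H(1/3)} and \eqref{evaluation of F(3,5) in terms of H}) rewritten through the parameterization \eqref{X parametric}--\eqref{Y parametric}, and the paper supplies no more detail than you propose about checking that the parameterization satisfies the quartic $x$--$y$ relation or about fixing the sheet of $\sqrt{-3+t^2+2t^3}$.

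However, the one concrete check you propose for pinning down the branch and orientation is backwards. Since $v=\bigl((1-\alpha)(1-\beta)\bigr)^{1/3}$, one has $v=0$ iff $\alpha=\beta=1$, whence $u=(\alpha\beta)^{1/3}=1$, $R=0$ and $x=(1-R+v)/u=1$; it is $v=1$ (i.e.\ $\alpha=\beta=0$, $u\to0$, $R=2$) that gives $x\to0$. Because $x=0$ at $t=1$ and $x\to1$ as $t\to\infty$, the correct correspondence is $t=1\leftrightarrow v=1$ and $t\to\infty\leftrightarrow v=0$, i.e.\ $t$ runs over $(\infty,1]$ as $v$ runs over $[0,1]$ --- exactly the paper's remark that ``if $v\in[0,1]$, then $x\in[1,0]$ and $t\in(\infty,1]$''. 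This orientation reversal is precisely what converts $-4\pi\int_{v\in[0,1]}$ into $+4\pi\int_{1}^{\infty}$; your proposed matching ($t=1\leftrightarrow v=0$, via the incorrect implication $v=0\Rightarrow u=0\Rightarrow x=0$) would fix the opposite orientation and hence yield the wrong sign in \eqref{conductor 15 elementary integral}. The remaining points you raise --- interpreting $\d\arctan(\sqrt3 y)$ as the differential of a continuous angle across the sign change of $(3-t)$ at $t=3$, and convergence at the endpoints --- are sound and consistent with how the paper later splits the integral at $t=3$.
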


Despite the fact that \eqref{conductor 15 elementary integral} is
easy to compute numerically, it is still too complicated in its
present form.  The PSLQ algorithm was instrumental in discovering
the following steps. First notice that the differential splits
into two pieces. The following identity is trivial to verify with
a computer:
\begin{equation*}
\d\arctan(\sqrt{3}y)
=2\d\arctan\frac{t}{\sqrt{3}}
+\d\arctan\frac{(3-t)(3+3t+2t^2)}{3(1+t)\sqrt{-3+t^2+2t^3}}.
\end{equation*}
Furthermore, if we introduce the real Galois conjugate of~$x$,
\begin{equation}\label{X conjugate}
\bar{x}=\frac{(1-t)^2(3t+t^2+\sqrt{3}\sqrt{-3+t^2+2t^3})}{(3+t^2)^2},
\end{equation}
then \eqref{conductor 15 elementary integral} can be broken into
four integrals.  We have
\begin{align}\label{elementary integral reduction part 1}
&
45L(E_{15},2)+2\pi\sqrt{3}L(\chi_{-3},2)+\frac{4\pi^2}{3}\log(3)
\\ &\quad
=4\pi\int_{1}^{\infty}\log(x\bar{x})\,\d\arctan\frac{t}{\sqrt{3}}
+4\pi\int_{1}^{\infty}\log\frac{x}{\bar{x}}\,\d\arctan\frac{t}{\sqrt{3}}
\nonumber
\\ &\quad\qquad
+2\pi\int_{1}^{\infty}\log(x\bar{x})\,\d\arctan\frac{(3-t)(3+3t+2t^2)}{3(1+t)\sqrt{-3+t^2+2t^3}}
\nonumber
\\ &\quad\qquad
+2\pi\int_{1}^{\infty}\log\frac{x}{\bar{x}}\,\d\arctan\frac{(3-t)(3+3t+2t^2)}{3(1+t)\sqrt{-3+t^2+2t^3}},
\nonumber
\end{align}
where $x$ and $\bar{x}$ are defined in \eqref{X parametric} and~\eqref{X conjugate}.
It is unfortunate that the integrals in \eqref{elementary integral
reduction part 1} are so complicated.  We will simplify all four
integrals in the following four lemmas. Two of them reduce to the
desired quantities almost immediately.

\begin{lemma}
\label{piece1}
The following evaluation holds:
\begin{equation}\label{D2 formula part 2}
\int_{1}^{\infty}\log(x\bar{x})\,\d\arctan\frac{t}{\sqrt{3}}
=-\sqrt{3}L(\chi_{-3},2)-\frac{2\pi}{3}\log 3.
\end{equation}
\end{lemma}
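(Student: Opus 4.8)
The plan is to collapse the integrand to an elementary function of $t$ and then recognize a classical logsine integral. First I would note that $x$ and $\bar x$ in \eqref{X parametric} and \eqref{X conjugate} are conjugate over the field generated by $\sqrt{-3+t^2+2t^3}$, so their product is rational in $t$; the key algebraic identity is
\begin{equation*}
(3t+t^2)^2-3(-3+t^2+2t^3)=t^4+6t^2+9=(t^2+3)^2,
\end{equation*}
which gives
\begin{equation*}
x\bar x=\frac{(1-t)^4(t^2+3)^2}{(3+t^2)^4}=\frac{(1-t)^4}{(3+t^2)^2}>0
\qquad\text{for }t>1 .
\end{equation*}
Hence $\log(x\bar x)=4\log(t-1)-2\log(t^2+3)$ on $(1,\infty)$. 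The only singularity of the resulting integrand is a logarithmic one at $t=1$, which is harmless against the smooth measure $\d\arctan(t/\sqrt3)$, and as $t\to\infty$ the integrand tends to $0$; so the integral converges.

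Next I would substitute $\theta=\arctan(t/\sqrt3)$, i.e.\ $t=\sqrt3\tan\theta$, so that $\d\arctan(t/\sqrt3)=\d\theta$ and the range becomes $\theta\in[\pi/6,\pi/2]$. Using $t-1=\sqrt3\tan\theta-1=2\sin(\theta-\pi/6)/\cos\theta$ and $t^2+3=3/\cos^2\theta$ one finds
\begin{equation*}
\log(x\bar x)=4\log\bigl(2\sin(\theta-\tfrac{\pi}{6})\bigr)-4\log\cos\theta-2\log3+4\log\cos\theta
=4\log\bigl(2\sin(\theta-\tfrac{\pi}{6})\bigr)-2\log3 ,
\end{equation*}
the $\log\cos\theta$ terms cancelling. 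Shifting $\phi=\theta-\pi/6$ and using that the interval has length $\pi/3$,
\begin{equation*}
\int_{1}^{\infty}\log(x\bar x)\,\d\arctan\frac{t}{\sqrt3}
=-\frac{2\pi}{3}\log3+4\int_{0}^{\pi/3}\log(2\sin\phi)\,\d\phi .
\end{equation*}

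It then remains to evaluate the logsine integral, which I would do via the Clausen function: $\int_{0}^{\psi}\log(2\sin\phi)\,\d\phi=-\tfrac12\operatorname{Cl}_2(2\psi)$, so with $\psi=\pi/3$ and the duplication relation $\operatorname{Cl}_2(\pi/3)=\tfrac32\operatorname{Cl}_2(2\pi/3)$ one gets $\int_{0}^{\pi/3}\log(2\sin\phi)\,\d\phi=-\tfrac13\operatorname{Cl}_2(\pi/3)$. Splitting the Fourier series $\operatorname{Cl}_2(\pi/3)=\sum_{n\ge1}n^{-2}\sin(n\pi/3)$ according to residues $\bmod 6$ identifies it with $\tfrac{3\sqrt3}{4}L(\chi_{-3},2)$, whence $\int_{0}^{\pi/3}\log(2\sin\phi)\,\d\phi=-\tfrac{\sqrt3}{4}L(\chi_{-3},2)$. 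Substituting back gives exactly $-\tfrac{2\pi}{3}\log3-\sqrt3\,L(\chi_{-3},2)$, which is \eqref{D2 formula part 2}. I do not expect a real obstacle here: the only point needing care is the conjugate‑norm simplification of $x\bar x$ together with the choice of substitution matched to the differential, and the logsine value is entirely classical (equivalently, a special value of the Lobachevsky function) that may be quoted from the literature or verified directly from the series for $\operatorname{Cl}_2$.
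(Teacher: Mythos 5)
Your argument is correct and follows essentially the same route as the paper: the norm computation giving $x\bar{x}=(1-t)^4/(3+t^2)^2$, the substitution $t=\sqrt{3}\tan\theta$ turning the integrand into $\tfrac{16}{9}\sin^4(\theta-\pi/6)$, and the reduction to $4\int_0^{\pi/3}\log(2\sin\phi)\,\d\phi-\tfrac{2\pi}{3}\log 3$ with the classical Clausen value $\operatorname{Cl}_2(\pi/3)=\tfrac{3\sqrt3}{4}L(\chi_{-3},2)$. The only difference is that you spell out the Clausen evaluation (duplication formula plus the mod-$6$ Fourier splitting) where the paper simply cites standard evaluations, and both details check out.
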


\begin{proof}
First set $t=\sqrt{3}\tan\theta$, and notice
\begin{align}
\label{X times barX}
x\bar{x}&=\frac{(1-t)^4}{(3+t^2)^2}
=\frac{16}{9}\sin^4\biggl(\theta-\frac{\pi}{6}\biggr),
\\
\label{X plus barX} x+\bar{x} &=\frac{2t(1-t)^2(3+t)}{(3+t^2)^2}
=\frac{16}{3}\sin^2\biggl(\theta-\frac{\pi}{6}\biggr)\cos\biggl(\theta-\frac{\pi}{6}\biggr)\sin\theta.
\end{align}
It follows immediately that
\begin{align*}
\int_{1}^{\infty}\log(x\bar{x})\,\d\arctan\frac{t}{\sqrt{3}}
&=\int_{\pi/6}^{\pi/2}\log\biggl(\frac{16}{9}\sin^4\biggl(\theta-\frac{\pi}{6}\biggr)\biggr)\d\theta
\\
&=4\int_{0}^{\pi/3}\log(2\sin\theta)\,\d\theta-\frac{2\pi}{3}\log 3
\\
&=-\sqrt{3}L(\chi_{-3},2)-\frac{2\pi}{3}\log 3,
\end{align*}
where the last step makes use of standard evaluations of the Clausen functions.
\end{proof}

\begin{lemma}
\label{piece2}
We have
\begin{equation}\label{m(1) easy formula}
\int_{1}^{\infty}\log\frac{x}{\bar{x}}\,\d\arctan\frac{t}{\sqrt{3}}
=-2\pi\,\m\biggl(1+X+\frac{1}{X}+Y+\frac{1}{Y}\biggr).
\end{equation}
\end{lemma}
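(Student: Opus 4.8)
The plan is to transform the left-hand integral, via $t=\sqrt3\tan\theta$, into exactly the Jensen-type integral that computes the Mahler measure on the right.

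First I would rewrite the Mahler measure. Putting $X=e^{i\phi}$ and clearing denominators, $1+X+X^{-1}+Y+Y^{-1}$ becomes $Y^{-1}(Y^2+aY+1)$ with $a=1+2\cos\phi$; the two roots of $Y^2+aY+1$ multiply to $1$, so by Jensen's formula the inner integral over $|Y|=1$ equals $\log^+\bigl|\tfrac{a+\sqrt{a^2-4}}2\bigr|$, which is nonzero precisely when $a>2$, i.e.\ $\phi\in(-\pi/3,\pi/3)$, and there equals $\log\tfrac{a+\sqrt{a^2-4}}2\ge0$. Hence
\[
\m(1+X+X^{-1}+Y+Y^{-1})=\frac1\pi\int_{0}^{\pi/3}\log\frac{1+2\cos\phi+\sqrt{(1+2\cos\phi)^2-4}}2\,\d\phi .
\]

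Next I would treat the left-hand side. With $t=\sqrt3\tan\theta$ one has $\d\arctan(t/\sqrt3)=\d\theta$, and $t\in[1,\infty)$ corresponds to $\theta\in[\pi/6,\pi/2)$. Since $x$ and $\bar x$ are the roots of $Z^2-(x+\bar x)Z+x\bar x$, the formulas \eqref{X times barX} and \eqref{X plus barX} give
\[
\frac{x}{\bar x}+\frac{\bar x}{x}=\frac{(x+\bar x)^2}{x\bar x}-2=16\cos^2\!\Bigl(\theta-\tfrac\pi6\Bigr)\sin^2\theta-2=:w .
\]
Using the factorization $-3+t^2+2t^3=(t-1)(2t^2+3t+3)$ one checks $0<x<\bar x$ for $t>1$, so $\log(x/\bar x)=-\log\tfrac{w+\sqrt{w^2-4}}2$; a product-to-sum identity rewrites $w=A^2-2$ with $A=2\sin(2\theta-\tfrac\pi6)+1\in[2,3]$, and because $w+2=A^2$ the elementary identity $\tfrac{w+\sqrt{w^2-4}}2=\bigl(\tfrac{A+\sqrt{A^2-4}}2\bigr)^2$ halves the logarithm:
\[
\int_{1}^{\infty}\log\frac{x}{\bar x}\,\d\arctan\frac{t}{\sqrt3}=-2\int_{\pi/6}^{\pi/2}\log\frac{A+\sqrt{A^2-4}}2\,\d\theta .
\]

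Finally I would set $\phi=\tfrac{2\pi}3-2\theta$, so that $\sin(2\theta-\tfrac\pi6)=\cos\phi$, hence $A=1+2\cos\phi$, while $\theta\in[\pi/6,\pi/2]$ corresponds exactly to $\phi\in[\pi/3,-\pi/3]$ and $\d\theta=-\tfrac12\d\phi$; the integral becomes $-\int_{-\pi/3}^{\pi/3}\log\tfrac{1+2\cos\phi+\sqrt{(1+2\cos\phi)^2-4}}2\,\d\phi$, which by evenness in $\phi$ is precisely $-2\pi$ times the Mahler-measure integral above, proving \eqref{m(1) easy formula}. I expect the only real care points to be (i) fixing the branch through the inequality $x<\bar x$, and (ii) verifying $2\le w\le7$ throughout --- so that $\sqrt{w^2-4}$ is real and the parameter range lands exactly on the Jensen-active interval $\phi\in(-\pi/3,\pi/3)$ --- both of which follow from the cubic factorization above together with elementary bounds on $\sin(2\theta-\pi/6)$.
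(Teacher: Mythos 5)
Your proposal is correct, and it takes a genuinely different route from the paper. The paper also starts from Jensen's formula applied to the quadratic $(Z-x/\bar x)(Z-\bar x/x)=(Z+1)^2-16Z\cos^2(\theta-\tfrac{\pi}{6})\sin^2\theta$, but it then keeps everything at the level of Mahler measures: it extends the $\theta$-range to $[0,\pi]$ (using that the one-variable measure vanishes for $\theta\in[0,\pi/6]\cup[\pi/2,\pi]$), recognizes the outer integral as a second Mahler-measure average, and finishes with the torus substitution $(Z,T)\mapsto(X^2,i\zeta^{-1}Y)$ together with the factorization $(X^2+1)^2-X^2(Y+Y^{-1}+1)^2=\prod(\pm)$ to land on $-2\pi\,\m(1+X+X^{-1}+Y+Y^{-1})$. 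You instead collapse both sides to the same explicit single integral: Jensen in $Y$ turns the right-hand Mahler measure into $\frac1\pi\int_0^{\pi/3}\log\frac{a+\sqrt{a^2-4}}2\,\d\phi$ with $a=1+2\cos\phi$, while on the left the product-to-sum identity $4\cos(\theta-\tfrac{\pi}{6})\sin\theta=1+2\sin(2\theta-\tfrac{\pi}{6})=A$ plus the square-root identity $\frac{w+\sqrt{w^2-4}}2=\bigl(\frac{A+\sqrt{A^2-4}}2\bigr)^2$ (valid since $w+2=A^2$) halves the logarithm, and the affine change $\phi=\tfrac{2\pi}{3}-2\theta$ matches the two integrals exactly, with the range $\theta\in[\pi/6,\pi/2]$ landing precisely on the Jensen-active window $\phi\in(-\pi/3,\pi/3)$. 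Your branch checks ($0<x<\bar x$ via the factorization $-3+t^2+2t^3=(t-1)(2t^2+3t+3)$, and $A\in[2,3]$ so $w\in[2,7]$) are exactly the points that need care, and they hold. The trade-off: your argument is more elementary and self-contained (no torus substitutions, everything is a concrete one-dimensional integral), whereas the paper's version is shorter once one is comfortable manipulating multivariable Mahler measures and generalizes more readily to situations where the inner Jensen integral does not admit such a clean closed form.
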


\begin{proof}
Notice that
$\bar{x}/x>1$ and $0<x/\bar{x}<1$, whenever $t\in[1,\infty)$.
Therefore, if $t=\sqrt{3}\tan\theta$ and
$\theta\in[\pi/6,\pi/2]$, by Jensen's formula
\begin{equation*}
\begin{split}
\log\frac{x}{\bar{x}}
&=-\m\biggl(\biggl(Z-\frac{x}{\bar{x}}\biggr)\biggl(Z-\frac{\bar{x}}{x}\biggr)\biggr)
\\
&=-\m\biggl((Z+1)^2-16Z\cos^2\biggl(\theta-\frac{\pi}{6}\biggr)\sin^2\theta\biggr),
\end{split}
\end{equation*}
where we simplified the polynomial using \eqref{X times barX}
and~\eqref{X plus barX}. Also observe that if
$\theta\in[0,\pi/6]\cup[\pi/2,\pi]$, then
$|x/\bar{x}|=|\bar{x}/x|=1$.  In those cases the Mahler measure is
identically zero. Therefore, we can write
\begin{align*}
\int_{1}^{\infty}\log\frac{x}{\bar{x}}\,\d\arctan\frac{t}{\sqrt{3}}
&=-\int_{0}^{\pi}\m\biggl((Z+1)^2-16Z\cos^2\biggl(\theta-\frac{\pi}{6}\biggr)\sin^2\theta\biggr)\d\theta
\\
\intertext{(introducing the notation $T=e^{i\theta}$ and
$\zeta=e^{-\pi i/6}$)}
&=-\int_{0}^{\pi}\m\bigl((Z+1)^2+Z(T^2\zeta-T^{-2}\zeta^{-1}+i)^2\bigr)\d\theta
\\
&=-\frac12\int_{0}^{2\pi}\m\bigl((Z+1)^2+Z(T\zeta-T^{-1}\zeta^{-1}+i)^2\bigr)\d\theta
\\
&=-\pi\,\m\bigl((Z+1)^2+Z(T\zeta-T^{-1}\zeta^{-1}+i)^2\bigr)
\\
\intertext{(using the substitution
$(Z,T)\mapsto(X^2,i\zeta^{-1}Y)$ and the elementary properties of
Mahler measures)}
&=-\pi\,\m\bigl((X^2+1)^2-X^2(Y+Y^{-1}+1)^2\bigr)
\\
&=-2\pi\,\m(1+X+X^{-1}+Y+Y^{-1}).
\end{align*}
\vskip-\baselineskip
\end{proof}

\begin{lemma}
\label{piece3}
The following formula is true:
\begin{align}\label{D2 formula}
&
\int_{1}^{\infty}\log\frac{x}{\bar{x}}\,\d\arctan\frac{(3-t)(3+3t+2t^2)}{3(1+t)\sqrt{-3+t^2+2t^3}}
\\ &\quad
=2\pi\,\m\bigl(-Y^2+X(1-Y-2Y^2-Y^3+Y^4)-X^2Y^2\bigr).
\nonumber
\end{align}
\end{lemma}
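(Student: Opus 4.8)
The plan is to mimic the structure of the proof of Lemma~\ref{piece2}, turning the integral of $\log(x/\bar x)$ against an arctangent differential into a single Mahler measure via Jensen's formula. First I would perform a rationalizing substitution in the arctangent argument: the presence of $\sqrt{-3+t^2+2t^3}$ together with the factor $(3-t)(3+3t+2t^2)$ in the numerator and $3(1+t)$ in the denominator suggests that after substituting $t = \sqrt{3}\tan\theta$ (the same substitution used in Lemma~\ref{piece1} and Lemma~\ref{piece2}) the arctangent differential $\d\arctan\bigl((3-t)(3+3t+2t^2)/(3(1+t)\sqrt{-3+t^2+2t^3})\bigr)$ collapses to a clean expression in $\theta$; failing that, a second substitution expressing everything in terms of a single uniformizing variable for the genus-$0$ or elliptic locus should linearize the differential. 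The key identities to establish at this stage are closed forms for $x\bar x$ and $x+\bar x$ — or rather for $x/\bar x + \bar x/x$ — analogous to \eqref{X times barX} and \eqref{X plus barX}, since those are exactly what feed into Jensen's formula.

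Next, since $0 < x/\bar x < 1 < \bar x/x$ on $t \in [1,\infty)$ (the same positivity as in Lemma~\ref{piece2}, because $\bar x$ and $x$ differ only in the sign of the square root and $x\bar x > 0$), Jensen's formula gives
\[
\log\frac{x}{\bar x} = -\m\!\left(\left(Z - \frac{x}{\bar x}\right)\!\left(Z - \frac{\bar x}{x}\right)\right)
= -\m\!\left((Z+1)^2 - Z\left(\frac{x}{\bar x} + \frac{\bar x}{x} + 2\right)\right),
\]
and I would substitute the closed form of $x/\bar x + \bar x/x$ (a rational function of $t$, hence of $\tan\theta$) into this. The integral over $t \in [1,\infty)$ then becomes an integral over a finite $\theta$-interval of a one-variable Mahler measure; extending the range to a full period (checking, as in Lemma~\ref{piece2}, that the integrand is identically zero on the complementary arc because $|x/\bar x| = 1$ there) turns the single integral $\int \m(\cdots)\,\d\theta$ into the Mahler measure of a two-variable polynomial, with $e^{i\theta}$ promoted to a second torus variable $Y$. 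A final change of variables of the form $(Z, e^{i\theta}) \mapsto (\text{monomial in } X, \text{monomial in } Y)$, together with the elementary transformation properties of the Mahler measure (invariance under $X \mapsto X^{\pm1}$, monomial substitutions, and multiplication by monomials), should deliver the stated polynomial $-Y^2 + X(1 - Y - 2Y^2 - Y^3 + Y^4) - X^2 Y^2$, possibly after recognizing it as a disguised form of a more symmetric polynomial such as $(1+X)(1+Y)(X+Y) - kXY$ for a suitable $k$.

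The main obstacle I anticipate is the first step: verifying that the arctangent differential genuinely linearizes under $t = \sqrt{3}\tan\theta$ (or some explicit refinement), and simultaneously pinning down the closed forms for $x\bar x$ and $x/\bar x + \bar x/x$ as clean trigonometric expressions — the paper flags that PSLQ was needed to discover the decomposition, which signals that these algebraic simplifications are delicate rather than routine. A secondary subtlety is tracking the orientation and endpoints carefully through the substitution $t \in [1,\infty) \leftrightarrow \theta \in [\pi/6, \pi/2]$ and through the passage to a full period, so that no spurious sign or factor of $2$ creeps in; the target coefficient $2\pi$ on the right-hand side (versus $-2\pi$ in Lemma~\ref{piece2}) must emerge from exactly this bookkeeping. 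Once the differential is linear in $\d\theta$ and the Jensen step is in place, the remainder is the same monomial-substitution manipulation of Mahler measures carried out explicitly in Lemma~\ref{piece2}.
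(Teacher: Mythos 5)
Your plan breaks down precisely at the step you flag as the main obstacle, and the failure is structural rather than computational. Under $t=\sqrt3\tan\theta$ the differential $\d\arctan\frac{(3-t)(3+3t+2t^2)}{3(1+t)\sqrt{-3+t^2+2t^3}}$ does \emph{not} collapse to a multiple of $\d\theta$: it is proportional to $\frac{(1-4t-t^2)\,\d t}{(3+t^2)\sqrt{-3+t^2+2t^3}}$ (this is exactly the form exploited in Lemma~\ref{piece4}), i.e.\ it is a differential on the genus-one curve $s^2=2t^3+t^2-3$, and no substitution in $t$ alone, trigonometric or rational, can turn it into the uniform measure on a circle. Consequently your Jensen step --- writing $\log\frac{x}{\bar x}=-\m\bigl((Z+1)^2-Z\bigl(\frac{x}{\bar x}+\frac{\bar x}{x}+2\bigr)\bigr)$ with $\frac{x}{\bar x}+\frac{\bar x}{x}+2=\frac{4t^2(3+t)^2}{(3+t^2)^2}$, which is indeed rational in $t$ by \eqref{X times barX} and \eqref{X plus barX} --- leaves you integrating a one-variable Mahler measure against an elliptic differential, and there is no ``full period'' to extend over, so the passage to a two-variable Mahler measure never happens. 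Note also that the target polynomial is quartic in $Y$ (it is the A-polynomial of the figure-eight knot, cf.\ \eqref{knot dilog}) and is not a disguised $(1+X)(1+Y)(X+Y)-kXY$; it cannot arise from your quadratic-in-$Z$ Jensen factorization.

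The paper's proof inverts your division of labour. It takes $u=\sqrt{x/\bar x}$ as the new integration variable, so the logarithm becomes the elementary $2\log u$, and the key discovery is that the arctangent argument is \emph{algebraic in $u$}: $v=\pm\frac{1+u}{1-u}\sqrt{\frac{-1+3u-u^2}{1+u+u^2}}$, the two signs on $t\in[1,3]$ and $t\in[3,\infty)$ folding the integral into $4\int_1^{(3-\sqrt5)/2}\log u\,\d\arctan(\cdot)$. After a further substitution $r=\frac{-1+3u-u^2}{1+u+u^2}$, Jensen's formula is applied to a \emph{quartic}: the quantity inside the logarithm is the unique root outside the unit circle of $(1-Y)(1-Y^3)-\frac{4(1-3r)}{(1+r)^2}Y^2$ for $r\in[0,1/3]$. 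Only then does a trigonometric substitution, $\tan^2\theta=\frac{r(5+r)}{1-3r}$, linearize the differential, and it simultaneously converts the coefficient into $4\cos^2\theta$ because $\frac{1-3r}{(1+r)^2}=\cos^2\theta$; extending $\theta$ over a full period and writing $4\cos^2\theta=(X+X^{-1})^2$ on $|X|=1$ yields $2\pi\,\m\bigl((1-Y)(1-Y^3)X-Y^2(X+1)^2\bigr)$, which expands to \eqref{D2 formula}. To repair your outline you would need to supply these three nontrivial ingredients --- the change of variable $u=\sqrt{x/\bar x}$ together with the algebraic relation for the arctangent argument, the quartic Jensen identification (including the count of roots outside the unit circle), and the final $r\mapsto\theta$ substitution; the analogy with Lemma~\ref{piece2} does not provide them.
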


\begin{proof}
The proof follows from
parameterizing the integral differently.  If we let
\begin{equation*}
u:=\sqrt{\frac{x}{\bar{x}}}
=\sqrt{\frac{3t+t^2-\sqrt{3(-3+t^2+2t^3)}}{3t+t^2+\sqrt{3(-3+t^2+2t^3)}}}
\end{equation*}
and
\begin{equation*}
v:=\frac{(3-t)(3+3t+2t^2)}{3(1+t)\sqrt{-3+t^2+2t^3}},
\end{equation*}
then
\begin{equation*}
v=\pm\frac{1+u}{1-u}\sqrt{\frac{-1+3u-u^2}{1+u+u^2}};
\end{equation*}
the plus sign is chosen for $t\in[1,3]$, and the minus sign
is chosen for $t\in[3,\infty)$. Furthermore, when $t\in[1,3]$ we
have $u\in[1,(3-\sqrt{5})/2]$, and when $t\in [3,\infty)$ we
have $u\in[(3-\sqrt{5})/2,1]$. With a little work the
integral becomes
\begin{align*}
&
\int_{1}^{\infty}\log\frac{x}{\bar{x}}\,\d\arctan\frac{(3-t)(3+3t+2t^2)}{3(1+t)\sqrt{-3+t^2+2t^3}}
\\ &\quad
=4\int_{1}^{(3-\sqrt{5})/2}\log u\,\d\arctan\biggl(\frac{1+u}{1-u}\sqrt{\frac{-1+3u-u^2}{1+u+u^2}}\biggr)
\\
\intertext{(taking $r$ for $(-1+3u-u^2)/(1+u+u^2)$)} &\quad
=4\int_{1/3}^{0}\log\frac{3-r-\sqrt{5-14r-3r^2}}{2(1+r)}\,
\d\arctan\sqrt{\frac{r(5+r)}{1-3r}}\\
&\quad
=4\int_{0}^{1/3}\log\frac{3-r+\sqrt{5-14r-3r^2}}{2(1+r)}\,
\d\arctan\sqrt{\frac{r(5+r)}{1-3r}}.
\end{align*}
We can use Jensen's formula again, to substitute a one-variable
Mahler measure for the logarithmic term:
\begin{align*}
&\quad
=4\int_{0}^{1/3}\m\biggl((1-Y)(1-Y^3)-\frac{4(1-3r)}{(1+r)^2}Y^2\biggr)
\d\arctan\sqrt{\frac{r(5+r)}{1-3r}};
\end{align*}
note that the polynomial
$$
(1-Y)(1-Y^3)-\frac{4(1-3r)}{(1+r)^2}Y^2
$$
has only one zero outside the unit circle for $r\in[0,1/3]$.
Finally, if $r(5+r)/(1-3r)=\tan^2\theta$, then $(1-3r)/(1+r)^2=\cos^2\theta$
and the integral becomes
\begin{align*}
&\quad
=4\int_{0}^{\pi/2}\m\bigl((1-Y)(1-Y^3)-4Y^2\cos^2\theta\bigr)\d\theta
\\ &\quad
=\int_{0}^{2\pi}\m\bigl((1-Y)(1-Y^3)-4Y^2\cos^2\theta\bigr)\d\theta
\\ &\quad
=2\pi\,\m\bigl((1-Y)(1-Y^3)X-Y^2(X+1)^2\bigr),
\end{align*}
which expands into~\eqref{D2 formula}.
\end{proof}

\begin{lemma}
\label{piece4}
The following formula is valid:
\begin{align}\label{m(1) difficult formula}
&
\int_{1}^{\infty}\log(x\bar{x})\,\d\arctan\frac{(3-t)(3+3t+2t^2)}{3(1+t)\sqrt{-3+t^2+2t^3}}
\\ &\quad
=2\pi\log3-2\int_{0}^{1}\frac{(3+2u)\log u}{\sqrt{u(1-u)(3+u)(4+u)}}\,\d u.
\nonumber
\end{align}
\end{lemma}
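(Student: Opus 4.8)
The plan is to reduce the integral to the quartic integral on the right by an explicit change of variables, with the constant $2\pi\log3$ appearing as a boundary term. First I would simplify the integrand. Since $-3+t^2+2t^3=(t-1)(2t^2+3t+3)$ and $3+3t+2t^2$ is exactly the factor $2t^2+3t+3$, the argument of the arctangent is $v=\dfrac{3-t}{3(1+t)}\sqrt{\dfrac{2t^2+3t+3}{t-1}}$. Also $x\bar x=\dfrac{(1-t)^4}{(3+t^2)^2}$ is a perfect square, so I introduce the new variable $u=\dfrac{(1-t)^2}{3+t^2}$ (equivalently $t=\sqrt3\tan\theta$, $u=\tfrac43\sin^2(\theta-\tfrac\pi6)$). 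As $t$ runs over $[1,\infty)$ the quantity $u$ increases monotonically from $0$ to $1$, one has $\log(x\bar x)=2\log u$, and $\arctan v$ decreases from $\tfrac\pi2$ to $-\tfrac\pi2$. Hence the left-hand side equals $2\int_0^1\log u\,\d\arctan v$, and the identity to be proved becomes equivalent to
\[
\int_0^1\log u\left(\d\arctan v+\frac{(3+2u)\,\d u}{\sqrt{u(1-u)(3+u)(4+u)}}\right)=\pi\log3 .
\]

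Next I would compute $\d\arctan v=\d v/(1+v^2)$. A direct calculation gives $1+v^2=\dfrac{2(3+t^2)^2}{9(1+t)^2(t-1)}$; differentiating $v$ and simplifying, and using the factorization $t^4+4t^3+2t^2+12t-3=(t^2+3)(t^2+4t-1)$ so that a factor $t^2+3$ cancels, one is led to the compact form
\[
\d\arctan v=\frac{-3(t^2+4t-1)}{2(3+t^2)\sqrt{(t-1)(2t^2+3t+3)}}\,\d t .
\]
For the other term, $\d u=\dfrac{2(t-1)(3+t)}{(3+t^2)^2}\,\d t$, and it is helpful to record the identity $u(1-u)(3+u)(4+u)=4(2u+3)^2-(u^2+3u+6)^2$, which exhibits the quartic as a difference of two squares and thereby indicates how an arctangent of a rational function can be split off from it.

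The remaining step, and the genuinely hard one, is to transform $2\int_0^1\log u\,\d\arctan v$ into $2\pi\log3-2\int_0^1\frac{(3+2u)\log u}{\sqrt{u(1-u)(3+u)(4+u)}}\,\d u$. I would follow the pattern of the proof of Lemma~\ref{piece3} — a chain of explicit substitutions, but now carrying $\log(x\bar x)=2\log u$ rather than $\log(x/\bar x)$ — the new feature being that one of these substitutions genuinely introduces the elliptic radical $\sqrt{u(1-u)(3+u)(4+u)}$ (the difference-of-squares identity above guides the choice), and that a boundary contribution $2\pi\log3$ is produced along the way; in the spirit of the rest of the paper, the precise substitutions would be located with the \texttt{PSLQ} algorithm and then confirmed by a direct computation. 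The main obstacle is precisely this transformation: the elliptic curve on which $v$ naturally lives (birationally $w^2=2t^3+t^2-3$) and the curve $\sigma^2=u(1-u)(3+u)(4+u)$ have real loci with different numbers of components, so the required identity is not forced by an obvious birational correspondence, and reconciling the two radicals while correctly accounting for the elementary term $2\pi\log3$ is where essentially all of the work lies.
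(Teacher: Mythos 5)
Your preliminary reductions are correct and agree with the paper's first step: the factorization $-3+t^2+2t^3=(t-1)(2t^2+3t+3)$, the identity $1+v^2=\frac{2(3+t^2)^2}{9(1+t)^2(t-1)}$, and the resulting formula $\d\arctan v=\frac{-3(t^2+4t-1)}{2(3+t^2)\sqrt{2t^3+t^2-3}}\,\d t$ all check out. But everything after that is a description of what remains to be done rather than a proof of it. The entire content of the lemma is the passage from the $t$-integral carrying the cubic radical $\sqrt{2t^3+t^2-3}$ to the $u$-integral carrying the quartic radical $\sqrt{u(1-u)(3+u)(4+u)}$, together with the appearance of $2\pi\log3$, and you explicitly defer exactly this step to a future PSLQ search. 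That is a genuine gap, not a sketch of a proof.

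The missing idea is an involution, not a clever one-shot substitution. The paper observes that $\sigma:t\mapsto(t+3)/(t-1)$ maps $[1,\infty)$ to itself (swapping $[1,3]$ and $[3,\infty)$, fixing $t=3$), preserves the differential $\frac{(1-4t-t^2)\,\d t}{(3+t^2)\sqrt{2t^3+t^2-3}}$, and sends $\log\frac{(t-1)^2}{t^2+3}$ to $\log\frac{4}{t^2+3}$. Averaging the integral with its image under $\sigma$ replaces the logarithm by $\log\frac{2(t-1)}{t^2+3}=\log(u/3)$, where $u=\frac{6(t-1)}{t^2+3}$ is precisely the $\sigma$-invariant; this $u$ is a \emph{two-to-one} map of $[1,\infty)$ onto $[0,1]$, and pushing forward to the quotient is what manufactures the quartic radical, after which $\log(u/3)=\log u-\log3$ and the companion evaluation $\int_0^1\frac{(3+2u)\,\d u}{\sqrt{u(1-u)(3+u)(4+u)}}=\pi$ yield the $2\pi\log3$. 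This also shows that your closing diagnosis, while based on a correct observation (the two real loci have different numbers of components, so there is no birational correspondence), draws the wrong conclusion: the relation is a degree-$2$ quotient, which is exactly why your monotone substitution $u=(1-t)^2/(3+t^2)$ cannot succeed and why a non-monotone invariant must be used. Relatedly, your rewriting of the claim as a single integral $\int_0^1\log u\,(\d\arctan v+\frac{(3+2u)\,\d u}{\sqrt{\cdots}})$ silently identifies two different functions of $t$ both named $u$; the two differentials do not agree pointwise (e.g.\ at $t=2$ one finds $\d\arctan v/\d u\approx-2.80$ versus $-(3+2u)/\sqrt{u(1-u)(3+u)(4+u)}\approx-2.60$), so no local identity of the kind your difference-of-squares hint suggests can close the argument.
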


\begin{proof}
Let us begin by substituting \eqref{X times barX} for $x\bar{x}$ and simplifying
the differential. We have
\begin{align*}
&
\int_{1}^{\infty}\log(x\bar{x})\,
\d\arctan\frac{(3-t)(3+3t+2t^2)}{3(1+t)\sqrt{-3+t^2+2t^3}}
\\ &\quad
=3\int_{1}^{\infty}\frac{1-4t-t^2}{(3+t^2)\sqrt{-3+t^2+2t^3}}
\,\log\frac{(t-1)^2}{t^2+3}\,\d t
\\
\intertext{(after letting $t\mapsto (t+3)/(t-1)$)}
&\quad
=3\int_{1}^{\infty}\frac{1-4t-t^2}{(3+t^2)\sqrt{-3+t^2+2t^3}}
\,\log\frac4{t^2+3}\,\d t
\\
\intertext{(averaging the last two integrals)}
&\quad
=3\int_{1}^{\infty}\frac{1-4t-t^2}{(3+t^2)\sqrt{-3+t^2+2t^3}}
\,\log\frac{2(t-1)}{t^2+3}\,\d t.
\end{align*}
If we let $u/3=2(t-1)/(t^2+3)$, then the integral splits into two
parts for $t\in [1,3]$ and $t\in[3,\infty)$.  Some work reduces the
entire expression to
\begin{align*}
&\quad
=-2\int_{0}^{1}\frac{(3+2u)\log(u/3)}{\sqrt{u(1-u)(3+u)(4+u)}}\,\d u
\\ &\quad
=2\pi\log3-2\int_{0}^{1}\frac{(3+2u)\log u}{\sqrt{u(1-u)(3+u)(4+u)}}\,\d u,
\end{align*}
where on the final step we used the formula
\begin{align*}
\int_{0}^{1}\frac{(3 + 2u)\,\d u}{\sqrt{u(1-u)(3+u)(4+u)}}
&=\int_{0}^{1}\frac{\d(u(u+3))}{\sqrt{u(3+u)(4-u(3+u))}}
\\
&=\int_{0}^{4}\frac{\d v}{\sqrt{v(4-v)}}
=\pi.
\end{align*}
\vskip-\baselineskip
\end{proof}

While formulas \eqref{D2 formula part 2} and \eqref{m(1) easy
formula} have been reduced as far as possible, formulas \eqref{D2
formula} and \eqref{m(1) difficult formula} require slightly more
attention.

The right-hand side of formula \eqref{D2 formula} is
\emph{extremely} surprising. The polynomial inside the Mahler
measure is a knot invariant; namely,
$$
A(X,Y):=-Y^2+X(1-Y-2Y^2-Y^3+Y^4)-X^2Y^2
$$
is the A-polynomial of the figure eight knot, denoted $4_1$ by Rolfson \cite{Ro}. Boyd discussed
this particular polynomial in great detail \cite{BoKnot}. Its normalized Mahler
measure, $\pi\,\m(A)$, equals the volume of the hyperbolic
manifold obtained from the complement of $4_1$ in the $3$-sphere.
These volumes are well defined, and can be calculated in terms of
values of the Bloch--Wigner dilogarithm~\cite{BoRV}. The end result of that
analysis is the following identity:
\begin{equation}\label{knot dilog}
\pi\,\m\bigl(-Y^2+X(1-Y-2Y^2-Y^3+Y^4)-X^2Y^2\bigr)
=\frac{3\sqrt{3}}{2}L(\chi_{-3},2).
\end{equation}
Boyd has also informed us that Rodriguez-Villegas gave the first
proof of this result.  Although
$A(X,Y)=0$ defines a conductor 15 elliptic curve,
we are at a loss to explain this surprising
appearance of knot theory.  We will speculate that it must be deeply
connected to some type of underlying geometry associated with the
signature~3 modular equations.

Formula \eqref{m(1) difficult formula} is an analogue of
the integrals for the conductor $20$ and $24$ elliptic curves
\cite{RZ}, and we will reduce it to a Mahler measure
using a similar approach. For this, we introduce the integral
\begin{equation}
I(y):=-\frac2\pi\int_0^1\frac{(y-1+2u)\log u}{\sqrt{u(1-u)(y-1+u)(y+u)}}\,\d u.
\label{I(y)}
\end{equation}

\begin{proposition}
\label{prop-I(y)}
For $y\ge1$, the following evaluation is valid:
\begin{align}
I(y)
&=4\log2-\frac1{8y^2}\,{}_4F_3\biggl(\begin{matrix} \frac32, \, \frac32, \, 1, \, 1 \\
2, \, 2, \, 2 \end{matrix}\biggm| \frac1{y^2} \biggr)
-\frac1{y}\,{}_3F_2\biggl(\begin{matrix} \frac12, \, \frac12, \, \frac12 \\
1, \, \frac32 \end{matrix}\biggm| \frac1{y^2} \biggr)
\label{I(y)-hyper}
\\
&=m(4y)-m\biggl(\frac 4y\biggr)-\log\frac y4,
\label{I(y)-mahler}
\end{align}
where
$m(\alpha)=\m(\alpha+X+X^{-1}+Y+Y^{-1})$.
\end{proposition}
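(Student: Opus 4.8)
\emph{Proof plan.} Both parts rest on a single preliminary simplification of~\eqref{I(y)}. Since $y-1+2u=\d\bigl(u(u+y-1)\bigr)/\d u$ and $u(1-u)(y-1+u)(y+u)=s(y-s)$ with $s=u(u+y-1)$ increasing monotonically from $0$ to $y$ as $u$ runs over $[0,1]$ (this is the only place $y\ge1$ is used), the substitution $s=u(u+y-1)$ followed by an integration by parts removes the logarithm:
\begin{equation*}
I(y)=\frac4\pi\int_0^1\frac{\arcsin\sqrt{u(u+y-1)/y}}u\,\d u,
\end{equation*}
the boundary contributions vanishing because $\arcsin\sqrt{u(u+y-1)/y}=O(\sqrt u)$ as $u\to0^+$. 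Differentiating under the integral sign (using $1-\tfrac{u(u+y-1)}y=\tfrac{(1-u)(u+y)}y$) then collapses the $\arcsin$ and gives the clean formula
\begin{equation*}
I'(y)=\frac2{\pi y}\int_0^1\frac{(1-u)\,\d u}{\sqrt{u(1-u)(u+y-1)(u+y)}}.
\end{equation*}

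To prove~\eqref{I(y)-hyper} I would expand $I'(y)$ in powers of $1/y$: write the radicand factor as $\tfrac1y\bigl(1+\tfrac{u-1}y\bigr)^{-1/2}\bigl(1+\tfrac uy\bigr)^{-1/2}$, expand both binomials, integrate term by term against $\sqrt{(1-u)/u}$ using the Beta integrals $\int_0^1u^{m-1/2}(1-u)^{n+1/2}\,\d u=B(m+\tfrac12,n+\tfrac32)$, and then integrate the resulting series in $y$ from $+\infty$ down to $y$ with constant $I(+\infty)=4\log2$ (which follows from the $\arcsin$ form by dominated convergence). This expresses $I(y)$ as an explicit double power series in $1/y$, and the remaining task is to recognise that series as the combination $-\tfrac1{8y^2}\,{}_4F_3-\tfrac1y\,{}_3F_2$ of~\eqref{I(y)-hyper}. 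I expect this repackaging of the double hypergeometric sum into two ordinary ones to be the principal obstacle; a robust alternative is to check that $4\log2-I(y)$, viewed as a function of $1/y^2$, satisfies the same linear differential equation, with the same initial data, as the claimed right-hand side.

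For~\eqref{I(y)-mahler} I would show that the two sides have equal $y$-derivative for $y>1$ and agree at $y=1$. Jensen's formula in $X$ reduces $m(\alpha)$ to $\frac1{2\pi}\int_0^{2\pi}\log^+|X_1(\phi)|\,\d\phi$, where $X_1(\phi)$ is the root of $X^2+(\alpha+2\cos\phi)X+1$ of larger modulus; with $w=\sin^2(\phi/2)$ this becomes $\frac1\pi\int_0^{\min(1,\alpha/4)}\operatorname{arccosh}\bigl(\tfrac12(\alpha+2-4w)\bigr)\tfrac{\d w}{\sqrt{w(1-w)}}$. Differentiating in $\alpha$ — the Leibniz boundary term vanishes since $\operatorname{arccosh}(1)=0$ — gives $m'(\alpha)=\frac1\pi\int_0^{\min(1,\alpha/4)}\tfrac{\d w}{\sqrt{w(1-w)(\alpha-4w)(\alpha+4-4w)}}$. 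Specializing to $\alpha=4y$ and $\alpha=4/y$ and rescaling $w$, both $\tfrac{\d}{\d y}m(4y)$ and $-\tfrac{\d}{\d y}m(4/y)$ reduce to $\tfrac1\pi\int_0^1\tfrac{\d u}{\sqrt{Q(u)}}$ times $1$ and $1/y$ respectively, where $Q(u)=u(1-u)(u+y-1)(u+y)$. Combining this with the displayed formula for $I'(y)$ gives
\begin{equation*}
\frac{\d}{\d y}\Bigl(m(4y)-m\bigl(\tfrac4y\bigr)-\log\tfrac y4\Bigr)-I'(y)
=\frac1{\pi y}\int_0^1\frac{(y-1+2u)\,\d u}{\sqrt{Q(u)}}-\frac1y=0,
\end{equation*}
because $\int_0^1\frac{(y-1+2u)\,\d u}{\sqrt{Q(u)}}=\int_0^y\frac{\d s}{\sqrt{s(y-s)}}=\pi$ by the very substitution $s=u(u+y-1)$ used at the outset. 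Finally, at $y=1$ one has $Q(u)=u^2(1-u^2)$, so $I(1)=\tfrac4\pi\int_0^1\tfrac{\arcsin u}u\,\d u=2\log2$, while $m(4)-m(4)-\log\tfrac14=2\log2$ as well; together with continuity of both sides at $y=1$ this yields equality for all $y\ge1$. The one delicate point is justifying the differentiation of $m(4/y)$ for $y>1$ near the value of $\phi$ where the active range ceases to fill $[0,2\pi]$; this is harmless since the $\operatorname{arccosh}$ factor vanishes at that moving endpoint.
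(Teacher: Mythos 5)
Your argument splits into two independent halves, and they fare very differently. The proof you give of \eqref{I(y)-mahler} is complete and correct, and it is genuinely different from the paper's: the paper derives \eqref{I(y)-mahler} as a corollary of \eqref{I(y)-hyper} by quoting known ${}_4F_3$ and ${}_3F_2$ representations of $m(\alpha)$ from the literature, whereas you prove it directly by differentiating Jensen's formula for $m(4y)$ and $m(4/y)$ under the integral sign, matching derivatives against $I'(y)$ via the identity $\int_0^1(y-1+2u)\,\d u/\sqrt{Q(u)}=\pi$, and checking the value $2\log 2$ at $y=1$. That is a self-contained route to the Mahler-measure identity which avoids the external hypergeometric evaluations entirely, and your formula for $I'(y)$ agrees with the one the paper obtains (by the reverse order of operations: differentiate in $y$ first, then integrate by parts in $u$).

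The genuine gap is in \eqref{I(y)-hyper}. You reduce the problem to expanding
$I'(y)=\frac2{\pi y}\int_0^1(1-u)\,\d u/\sqrt{Q(u)}$ as a double binomial series in $1/y$ and then ``recognising'' the result as the derivative of $-\frac1{8y^2}\,{}_4F_3-\frac1y\,{}_3F_2$, but this recognition is exactly the hard step and you leave it undone (your fallback of verifying a common linear ODE for a sum of a ${}_4F_3$ and a ${}_3F_2$ in different powers of $1/y$ is an order-seven verification that is not sketched either). The collapse of your double sum into the single sequences $(\tfrac12)_n^2/n!^2$ is equivalent to a quadratic transformation of the complete elliptic integral, and the paper supplies precisely this missing ingredient: it writes $2(1-u)=(y+1)-(y-1+2u)$ --- the same decomposition you already use in your proof of \eqref{I(y)-mahler} --- evaluates $\int_0^1(y-1+2u)\,\d u/\sqrt{Q(u)}=\pi$, identifies $\frac{y+1}{\pi}\int_0^1\d u/\sqrt{Q(u)}$ with $\frac2\pi K\bigl(\frac{2\sqrt y}{y+1}\bigr)$, and applies Gauss's transformation $K(z)=\frac1{1+z}K\bigl(\frac{2\sqrt z}{1+z}\bigr)$ with $z=1/y$ to get $yI'(y)=-1+\frac{y+1}{y}\,{}_2F_1\bigl(\tfrac12,\tfrac12;1;1/y^2\bigr)$, after which the termwise match with the claimed hypergeometric combination is immediate. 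Without that step (or an explicit proof of the equivalent binomial/Beta identity), \eqref{I(y)-hyper} is not established. Alternatively, since your proof of \eqref{I(y)-mahler} is independent, you could close the gap by combining it with the cited ${}_4F_3$ and ${}_3F_2$ formulas for $m(4y)$ and $m(4/y)$ --- but that is not the route you propose.
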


\begin{proof}
First of all note that the integral $I(y)$ can be written as
$$
I(y)=-\frac2\pi\int_0^1\log u\cdot\frac{\partial w}{\partial u}\,\d u,
$$
where
$$
w(u)=w(u;y):=\arcsin\frac{2u(y-1+u)-y}{y}
$$
and, for $y\ge1$, the argument
$$
\frac{2u(y-1+u)-y}{y}
$$
monotonically changes from $-1$ to $1$ when $u\in[0,1]$.
Since
$$
\frac{\partial w}{\partial y}=\frac1y\,\frac{u(1-u)}{\sqrt{u(1-u)(y-1+u)(y+u)}},
$$
the integration by parts for $y>1$ results in
\begin{align*}
\frac{\d I}{\d y}
&=-\frac2\pi\int_0^1\log u\,\d\biggl(\frac{\partial w}{\partial y}\biggr)
\\
&=-\frac2\pi\,\log u\cdot\frac{\partial w}{\partial y}\bigg|_{u=0}^{u=1}
+\frac2\pi\int_0^1\frac{\partial w}{\partial y}\,\frac{\d u}u
\\
&=\frac1{\pi y}\int_0^1\frac{2(1-u)\,\d u}{\sqrt{u(1-u)(y-1+u)(y+u)}}.
\end{align*}
Consider also the related integral
$$
\frac{y+1}{\pi}\int_0^1\frac{\d u}{\sqrt{u(1-u)(y-1+u)(y+u)}}
=\frac{2}{\pi}K\biggl(\frac{2\sqrt y}{y+1}\biggr),
$$
where
$$
K(z)=\frac\pi2\,{}_2F_1\biggl(\begin{matrix} \frac12, \, \frac12 \\ 1 \end{matrix}\biggm|
z^2 \biggr), \qquad |z|\le1,
$$
is the complete elliptic integral of the first kind.
On using the Gauss quadratic transformation
$$
K(z)=\frac1{1+z}K\biggl(\frac{2\sqrt z}{1+z}\biggr)
$$
with the choice $z=1/y$ (hence $0<z<1$ for $y>1$), we obtain
\begin{align*}
\frac{y+1}{\pi}\int_0^1\frac{\d u}{\sqrt{u(1-u)(y-1+u)(y+u)}}
&=\frac{2}{\pi}\cdot\frac{y+1}yK\biggl(\frac1y\biggr)
\\
&=\frac{y+1}y\,{}_2F_1\biggl(\begin{matrix} \frac12, \, \frac12 \\ 1 \end{matrix}\biggm|
\frac1{y^2} \biggr).
\end{align*}
Therefore,
\begin{align}
\label{simpleI(y)}
\frac{y+1}y\,{}_2F_1\biggl(\begin{matrix} \frac12, \, \frac12 \\ 1 \end{matrix}\biggm|
\frac1{y^2} \biggr)
-y\frac{\d I}{\d y}
&=\frac1\pi\int_0^1\frac{(y-1+2u)\,\d u}{\sqrt{u(1-u)(y-1+u)(y+u)}}
\\
&=\frac1\pi\int_0^1\frac{\partial w}{\partial u}\,\d u
=\frac1\pi\bigl(w(1;y)-w(0;y)\bigr)=1,
\nonumber
\end{align}
and so we have
\begin{align*}
y\frac{\d I}{\d y}
&=-1+\frac{y+1}y\,{}_2F_1\biggl(\begin{matrix} \frac12, \, \frac12 \\ 1 \end{matrix}\biggm|
\frac1{y^2} \biggr)
=\sum_{n=1}^\infty\frac{(\frac12)_n^2}{n!^2}\,\frac1{y^{2n}}
+\sum_{n=0}^\infty\frac{(\frac12)_n^2}{n!^2}\,\frac1{y^{2n+1}}
\\
&=-y\frac{\d}{\d y}\biggl(\frac1{8y^2}\,{}_4F_3\biggl(\begin{matrix} \frac32, \, \frac32, \, 1, \, 1 \\
2, \, 2, \, 2 \end{matrix}\biggm| \frac1{y^2} \biggr)
+\frac1{y}\,{}_3F_2\biggl(\begin{matrix} \frac12, \, \frac12, \, \frac12 \\
1, \, \frac32 \end{matrix}\biggm| \frac1{y^2} \biggr) \biggr).
\end{align*}
The integration gives us
$$
I(y)
=C-\frac1{8y^2}\,{}_4F_3\biggl(\begin{matrix} \frac32, \, \frac32, \, 1, \, 1 \\
2, \, 2, \, 2 \end{matrix}\biggm| \frac1{y^2} \biggr)
-\frac1{y}\,{}_3F_2\biggl(\begin{matrix} \frac12, \, \frac12, \, \frac12 \\
1, \, \frac32 \end{matrix}\biggm| \frac1{y^2} \biggr).
$$
To compute the constant of integration we use definition~\eqref{I(y)} of the integral $I(y)$:
$$
C=\lim_{y\to+\infty}I(y)
=-\frac2\pi\int_0^1\frac{\log t}{\sqrt{t(1-t)}}\,\d t
=4\log2.
$$
Although we have done the computation for $y>1$, the resulting formula~\eqref{I(y)-hyper}
is valid for $y\ge1$ because of continuity at $y=1$. The two hypergeometric
series can be further reduced to the Mahler measures by using the
formulas~\cite{KO},~\cite{RV},~\cite{Rgsubmit}
\begin{equation*}
m(\alpha)=\log\alpha
-\frac{2}{\alpha^2}\,{}_4F_3\biggl(\begin{matrix} \frac32, \, \frac32, \, 1, \, 1 \\
2, \, 2, \, 2 \end{matrix}\biggm| \frac{16}{\alpha^2} \biggr)
\end{equation*}
and
\begin{equation*}
m(\alpha)=\frac\alpha4\,{}_3F_2\biggl(\begin{matrix} \frac12, \, \frac12, \, \frac12 \\
1, \, \frac{3}{2} \end{matrix}\biggm| \frac{\alpha^2}{16} \biggr)
\end{equation*}
for $\alpha\ge4$ and $0<\alpha\le4$, respectively. This proves formula~\eqref{I(y)-mahler}.
\end{proof}

On invoking the computation in~\eqref{simpleI(y)} we can also
state formula~\eqref{I(y)-mahler} in the form
\begin{equation*}
\frac2\pi\int_0^1\frac{(y-1+2u)\log\dfrac{\sqrt y}{2u}}{\sqrt{u(1-u)(y-1+u)(y+u)}}\,\d u
=m(4y)-m\biggl(\frac 4y\biggr).
\end{equation*}
When $y=4$ we obtain
\begin{align}
\label{I(4)}
-\frac2\pi\int_0^1\frac{(3+2u)\log u}{\sqrt{u(1-u)(3+u)(4+u)}}\,\d u
&=m(16)-m(1)=10m(1)
\\
&=10\m(1+X+X^{-1}+Y+Y^{-1}),
\nonumber
\end{align}
with the linear relation between $m(1)$ and $m(16)$ recently obtained
by Lal\'\i n~\cite{La} (see also \cite{GuR} for an elementary proof).

Combining \eqref{elementary integral reduction part 1},
\eqref{D2 formula part 2}, \eqref{m(1) easy formula},
\eqref{D2 formula}, \eqref{m(1) difficult formula},
\eqref{knot dilog}, and \eqref{I(4)}, we finally arrive at

\begin{maintheorem}
The following relation holds true:
$$
L(E_{15},2)=\frac{4\pi^2}{15}\m(1+X+X^{-1}+Y+Y^{-1}).
$$
\end{maintheorem}

\section{Conclusion}

This work has raised a number of questions which are worth
mentioning.  The first is whether or not it is possible to say
something about the $L$-functions of conductor $33$ elliptic
curves. Equations \eqref{F(1,11) and F(3,11) in terms of H} and
\eqref{H main theorem} can be used to produce a `coupled'
identity, relating $L(E_{11},2)$ and $L(E_{33},2)$ to an
elementary integral. Unfortunately, the integral presents an
enormous obstacle.  The polynomial relating $v$ and $R$ (obtained
from eliminating $u$ in \eqref{H main theorem}) has genus $3$.
\texttt{Maple} failed to find parametric formulas, and our
analysis stalled.  As a final complication, Boyd's paper does not
mention any identities involving conductor $33$ $L$-series
\cite{Bo1}.  It seems that an additional method of evaluating
\eqref{H main theorem} is needed.

It is also worth understanding why our method produces so many
`coupled' identities.  Perhaps one explanation for the conductor
$11$--$33$ pair, is that they both arise from integrating modular
forms on the same congruence subgroup, $\Gamma_{0}(33)$. We have
also produced a massively complicated formula for the conductor
$24$--$48$ pair, which may have a similar justification. The final
puzzling aspect of this work is that our proof of the conductor $15$
case required the non-trivial evaluation~\eqref{knot dilog}
of the Mahler measure of a knot polynomial. The fact that the conductor~$15$
$L$-series couples to a Dirichlet $L$-series, was probably fortunate
for our calculations.

\begin{acknowledgements}
The authors would like to thank David Boyd and Nathan Dunfield
for their valuable comments and details on identity~\eqref{knot dilog}.
We kindly acknowledge Bruce Berndt, David Boyd, Christopher Deninger,
and Michael Somos for their useful comments and encouragement.
\end{acknowledgements}

\end{document}